\newcommand\polymake{\texttt{polymake}\xspace}
\newcommand\topcom{\texttt{TOPCOM}\xspace}
\newcommand\mptopcom{\texttt{MPTOPCOM}\xspace}
\newcommand\mptopcomone{\texttt{MPTOPCOM-1}\xspace}
\newcommand\gfan{\texttt{Gfan}\xspace}
\newcommand\mts{\texttt{mts}\xspace}
\newcommand\mpi{\texttt{MPI}\xspace}
\newcommand\cddlib{\texttt{cddlib}\xspace}
\newcommand\valgrind{\texttt{Valgrind}\xspace}
\newcommand\massif{\texttt{massif}\xspace}
\newcommand\callgrind{\texttt{callgrind}\xspace}
\newcommand\permlib{\texttt{permlib}\xspace}
\DeclareSymbolFontAlphabet{\mathbb}{AMSb}
\newcommand\RR{{\mathbb R}}
\newcommand\TT{{\mathbb T}}
\newcommand\1{{\mathbf 1}}
\newcommand\SetOf[2]{\left\{\left.#1\vphantom{#2}\ \right|\ #2\vphantom{#1}\right\}}
\newcommand\smallSetOf[2]{\{{#1}\,|\,{#2}\}}
\DeclareMathOperator{\vol}{vol}
\DeclareMathOperator{\Sym}{Sym}
\newcommand{\id}{{\operatorname{id}}}
\newcommand{\mydepth}{\mathit{depth}}
\newcommand{\mynull}{\mathit{null}}
\newcommand{\eval}{v}               % evaluation function giving switch table
\newcommand{\graph}{\Gamma}         % reverse search graph
\newcommand{\group}{G}              % group acting on point config
\newcommand{\groupElem}{g}          % element of group
\newcommand{\jbound}{\psi}          % bound on the number of elements of set J
\newcommand{\switchTableSize}{\mu}  % index of last non-trivial row in switch table
\newcommand{\secondaryPoly}{\Sigma{\operatorname{-poly}}}
\newcommand{\secondaryFan}{\Sigma{\operatorname{-fan}}}
\newcommand{\PhiReg}{\Phi_{\operatorname{reg}}}
\newcommand{\PhiRegComp}{\Phi_{\operatorname{reg}}^c}
\theoremstyle{plain}
    \newtheorem{theorem}{Theorem}
    \newtheorem{corollary}[theorem]{Corollary}
    \newtheorem{lemma}[theorem]{Lemma}
    \newtheorem{proposition}[theorem]{Proposition}
\theoremstyle{definition}
    \newtheorem{remark}[theorem]{Remark}
    \newtheorem{example}[theorem]{Example}
    \newtheorem{definition}[theorem]{Definition}
\title{Parallel Enumeration of Triangulations}
\author{Charles Jordan}
\author{Michael Joswig}
\author{Lars Kastner}
\address[Charles Jordan]{
Graduate School of Information Science and Technology,
 Hokkaido University,
 N-14 W-9, 060-0814 Sapporo, Japan
}
\email{skip@ist.hokudai.ac.jp}
\thanks{%
Research by C.\ Jordan supported in part by JSPS Kakenhi Grants 16H02785 and 18K18027.
}
\address[Michael Joswig]{
Institut f{\"u}r Mathematik,
 TU Berlin,
 Str.\ des 17. Juni 136, 10623 Berlin, Germany
}
\email{joswig@math.tu-berlin.de}
\thanks{%
Research by M. Joswig is carried out in the framework of Matheon supported by Einstein Foundation Berlin. 
Further partial support by Deutsche Forschungsgemeinschaft (SFB-TRR 109: ``Discretization in Geometry and Dynamics'' and SFB-TRR 195: ``Symbolic Tools in Mathematics and their Application'') is gratefully acknowledged.
}
\address[Lars Kastner]{
Institut f{\"u}r Mathematik,
 TU Berlin,
 Str.\ des 17. Juni 136, 10623 Berlin, Germany
}
\email{lkastner@math.tu-berlin.de}
\thanks{%
Research by L. Kastner is supported by Deutsche Forschungsgemeinschaft (SFB-TRR 195: ``Symbolic Tools in Mathematics and their Application'').
}
\subjclass[2010]{52B55 (68U05)}
\keywords{triangulations of point configurations; reverse search}
\DeclareMathOperator{\gkz}{gkz}
\newcommand{\canonical}{\rho}
\DeclareMathOperator{\switch}{st}
\DeclareMathOperator{\Adj}{Adj}
\DeclareMathOperator{\conv}{conv}
\begin{document}

\begin{abstract}
  We report on the implementation of an algorithm for computing the set of all regular triangulations of finitely many points in Euclidean space.
  This algorithm, which we call \emph{down-flip reverse search},
  can be restricted, e.g., to computing full triangulations only; this case is particularly relevant for tropical geometry.
  Most importantly, down-flip reverse search allows for massive parallelization, i.e., it scales well even for many cores.
  Our implementation allows to compute the triangulations of much larger point sets than before.
\end{abstract}

\maketitle

\section{Introduction}
\noindent
Triangulations are ubiquitous in combinatorics, optimization, algebra and other parts of mathematics.
For an overview about the range of applications we recommend the first chapter of the monograph \cite{Triangulations} by De Loera, Rambau and Santos, and the rest of that book is recommended for the foundations.
The software which defines the state of the art is Rambau's \topcom \cite{TOPCOM}; see also \cite{PfeifleRambau03}.
An implementation of a different method is part of \gfan by Jensen \cite{gfan}.
While this comes with some advantages of its own, \topcom is superior in most cases.
In this article we describe an algorithm using different ideas but which still shares some of \topcom's features.
Most importantly we report on new computational results obtained with our implementation \mptopcom; some of these are out of the reach of the other software systems.

In order to explain the method it is best to start by looking into \topcom's algorithm; see also \cite[\S8.3]{Triangulations}.
As a first step, for a fixed point configuration, $P$, \topcom creates one triangulation as a \emph{seed}.
Then \topcom generates all triangulations of $P$ which can be obtained by local transformations, known as \emph{flips}.
The flips induce a graph structure on the set of triangulations of $P$, and \topcom makes a breadth first search starting at the seed.
In practical applications it is often important to restrict the attention to those triangulations of $P$ which are \emph{regular}, i.e., they are induced by a convex lifting function.
Regularity can be checked via solving a linear programming feasibility problem, and this is supported by \topcom, e.g., via calling \cddlib \cite{cddlib}.
Any two regular triangulations of $P$ are connected by a sequence of flips.
Checking for regularity on the way is costly but is also beneficial since the regular triangulations of $P$ have a particularly nice structure.
The reason is that they correspond to the vertices of a convex polytope, the \emph{secondary polytope} of $P$, which was introduced by Gel$'$fand, Kapranov and Zelevinsky \cite{GKZ}, and the edges of the secondary polytope correspond to flips (but there are flips which do not arise from edges).
The coordinates of a regular triangulation $\Delta$, seen as a vertex of the secondary polytope, form the \emph{GKZ-vector} of~$\Delta$.
Already in 2002 Imai et al.~\cite{Imai:2002} proposed an algorithm which exploits the GKZ-vectors to tailor a reverse search scheme \cite{AF93} for enumerating the regular triangulations of $P$, and we call their method \emph{down-flip reverse search}.
Surprisingly, the down-flip reverse search algorithm apparently received little attention, and some of its properties have been discovered independently by Pournin and Liebling \cite{PourninLiebling:2007}.
A key challenge in practice is that relevant point sets $P$ exhibit a great deal of symmetry.
Usually this symmetry is given in terms of generators of some finite group $\group$ acting on $P$ by permutations.
The real task now is to compute the (regular) triangulations of $P$ up to symmetry, i.e., exactly one representative from each $\group$-orbit.
This reduction is strictly necessary since the total number of triangulations is often too large.
Imai et al.~\cite{Imai:2002} also describe down-flip reverse search up to symmetry.

One of the main advantages of the reverse search algorithm is that it is output
sensitive. In particular, if one has bounds on the number of neighbors of a
node in the search graph, one can derive an effective upper bound on the memory
consumption of reverse search, see \cite[Theorem 3.2]{AF93}. The price one pays
is recomputation of intermediate results, whenever a node is analyzed, but not
visited. This obstacle can be partially overcome by using caches. While
\topcom's limit often is dictated by the amount of memory available,
\mptopcom's limit is given by the time one is willing to let the program run.
This is where parallelization comes in.

While the general algorithmic idea is known, a practical implementation of
down-flip reverse search requires one to overcome several challenges.  That
is the focus of this paper.
We will  show that our implementation \mptopcom of down-flip reverse search, which can enumerate hundreds of millions of triangulations of a given point set, is superior to other methods in practice.
Two aspects are most important.
First, for large input, the implementation must be parallelized in order to be able to benefit from modern hardware.
This is where the reverse search scheme shows its full strength; \mts \cite{mts_tutorial} is a competitive parallel implementation of the abstract reverse search method based on \mpi \cite{mpi}.
Second, by far the most frequent subtask is to compute a canonical representative for each $\group$-orbit of a given triangulation.
This allows to distinguish two orbits by comparing their representatives.
A minor drawback of parallelized reverse search is that those representatives are recomputed by several workers, even for the same orbit and the same triangulation.
This is because we want to avoid \emph{any} communication between the nodes in the computation tree since this is what makes parallelized reverse search so successful.
Our main new algorithmic contribution is a procedure for computing these canonical representatives in a way that is ---in our particular situation--- much faster than standard methods from computational group theory.
Despite our highly optimized setup, depending on the input and the desired output, the computation of the canonical representatives still may take more than 90\% of the combined total time.
The benefit of our approach, however, is tremendous.
It turns out that, even if we restrict our parallel implementation to a single core, we are able to beat \topcom by a factor of five or more on medium size input.
More importantly, on large input \mptopcom scales rather well even for more than a hundred cores.
This means that we are now able to compute triangulations of point sets an order of magnitude larger than before.

This paper is organized as follows.
In Section~\ref{sec:reverse} we collect the basic facts concerning the reverse search method of Avis and Fukuda \cite{AF93}.
This is a powerful general scheme for organizing a large enumeration via a rooted tree.
The paradigmatical example is a dual convex hull computation.
Here the input is a system of linear inequalities, and the output are the vertices and the rays of the feasible region, which is a convex polyhedron.
A generic linear objective function induces an orientation on the edges of that polyhedron, and any pivoting strategy for the simplex method yields a tree, in which each edge is directed toward the global optimum.
The basic idea of down-flip reverse search is to mimic this behavior on the secondary polytope of a point configuration.
We close this section by describing the principles underlying the parallelization employed by \mts \cite{mts_tutorial}.
Section~\ref{sec:triangulations} deals with the basic notions concerning triangulations, and we formulate the down-flip reverse search algorithm.
Then, in Section~\ref{sec:symmetry} we briefly explain the general approach of Imai et al.\ \cite{Imai:2002} to combine enumeration up to symmetry with down-flip reverse search.
The core of our paper is Section~\ref{sec:canonical}.
\emph{The} major challenge in \mptopcom's implementation is to efficiently find canonical representatives for each orbit of triangulations.
To this end we introduce the concept of \emph{switch tables} and \emph{evaluations} to facilitate that computation (cf.\ Definition~\ref{def:switch}).
This is developed in a way which specifically addresses the enumeration of triangulations, and yet we believe that it may also be useful in other circumstances.
Our main theoretical contribution is a procedure for computing canonical representatives via switch tables (cf.\ Algorithm~\ref{alg:canonical-rep}) and its analysis.
In Section~\ref{sec:experiments} we report on experimental results.
First we consider a few standard examples, such as the 16 vertices of the 4-dimensional regular cube.
More interesting are our new results.
This concerns, e.g., products of simplices; these computations have been employed by Schr\"oter for deriving results on coarsest matroid subdivision of hypersimplices \cite{Schroeter:1707.02814}.
Further, for the first time we are able to enumerate all regular and full triangulations of the 3-simplex with dilation factor of three.
This is a configuration of 20 points in $\RR^3$, and it turns out that it has precisely $21\, 125\, 102$ regular and full triangulations, up to the symmetry induced by the symmetric group of degree four, acting on the vertices of the tetrahedron (cf.\ Theorem~\ref{thm:cubic-surfaces}).
This outcome is relevant, e.g., for tropical geometry, as this leads to the classification of the smooth tropical cubic surfaces in the tropical 3-torus; see \cite[\S4.5]{Tropical+Book}.
A full account of the consequences from that one computation is beyond the scope of this paper.

\mptopcom is available as open source software which can be downloaded from \url{https://www.polymake.org/mptopcom}.

We are grateful to J\"org Rambau, Francisco Santos, Benjamin Schr\"oter and an anonymous reviewer for valuable comments and suggestions and to Benjamin Lorenz for assistance with the implementation.

\section{Budgeted reverse search and its parallelization}
\label{sec:reverse}
\noindent
Reverse search~\cite{AF93} is a technique for enumerating large sets of
objects.  Essentially, one explores a graph $\graph=(V,E)$ where $V$ is the set of
objects to be enumerated and the edges are given by an adjacency oracle.  For
every node $v$, the number $\delta(v)$ denotes the outdegree of $v$ in $\graph$. The adjacency
oracle $\Adj(v,j)$ returns the $j$th neighbor of $v\in V$ for $j\in[\delta]$
or $\mynull$ if no such neighbor exists.  To apply reverse search to a problem,
one provides the adjacency oracle, a local search function $\pi(v)$ and an
element $v^*\in V$.  It is required that $\pi(v)$ returns a tuple $(u,j)$
such that $\Adj(u,j)=v$ and that repeated application of $\pi$ to any $v\in V$
results in a path from $v$ to $v^*$.  The local search function therefore
generates a spanning tree of $\graph$ with root $v^*$.  We sometimes omit
the second component of $\pi(v)$ when it is not needed, e.g. in line 6 of
Algorithm~\ref{alg:reverse}.

\begin{algorithm}
\begin{algorithmic}[1]
\Procedure{RS}{$v^*$, $\Adj$, $\pi$}
        \State $v \gets v^*,\  j \gets 0,\  \mydepth \gets 0$
        \Repeat
        \While {$j < \delta(v)$} \label{Step:neighbor_list}
                \State $j \gets j+1$
                \If {$\pi(\Adj(v,j)) = v$} \label{Step:duplicate_work}
                        \State $v \gets \Adj(v,j)$
                        \State $j \gets 0$
                        \State $\mydepth \gets \mydepth+1$
                        \State {Output $v$}
                \EndIf
        \EndWhile
        \If {$\mydepth > 0$}
                \State $(v,j) \gets \pi(v)$
                \State $\mydepth \gets \mydepth-1$
        \EndIf
        \Until {$\mydepth=0$ and $j=\delta(v)$}
\EndProcedure
\end{algorithmic}
\caption{Reverse Search}
\label{alg:reverse}
\end{algorithm}

Reverse search (cf. Algorithm~\ref{alg:reverse}) traverses this spanning tree in depth-first fashion starting from $v^*$, and at each node outputs the object corresponding to that node.
One can use the local search function to backtrack.
This allows to implement reverse search in a way which does not require any additional memory after an initial setup.
However, it is also possible to cache auxiliary information needed by the application, for example to avoid recomputation when backtracking.
This way the memory usage can be controlled even for computations with extremely large output.
For more details on reverse search see~\cite{AF93}.

It was recognized from the beginning that reverse search can be easily
parallelized.  The enumeration process can be restarted from any node given
only a description of that node, and no node is reachable via multiple paths.
This means that one does not have to store previously-visited nodes, and
communication between processes is minimal.  Many reverse search trees are
highly unbalanced and so the underlying problem is to explore an unbalanced
tree in parallel.

The first parallel reverse search implementation was the generic reverse search
layer in ZRAM~\cite{BMFN99} which was applied in various areas,
including the first parallel program for vertex/facet enumeration~\cite{BMFN99}
and a program for certain quadratic maximization problems~\cite{FKL05}.
Other parallel reverse search applications include the computation of
Minkowski sums~\cite{Weibel10}.
Furthermore, Jensen computed exact mixed volumes \cite{Jensen:2016} and homotopy continuation \cite{Jensen:1601.02818} via parallel reverse search implemented in \gfan \cite{gfan}.

Recently budgeted reverse search~\cite{mplrs} was introduced as a
simple scheme for load-balancing in parallel reverse search.  There, one
adds an additional parameter (the \emph{budget}) and each of the parallel
processes explores its assigned subtree subject to the budget.  Once the
budget is exhausted, the process backtracks to the root of the subtree
while reporting unexplored nodes along the backtrack path.  These unexplored
nodes are scheduled for later exploration.  This is a particularly simple
approach to parallel tree exploration that in practice can scale beyond 
1000 cores~\cite{mplrs}.  If properties of the trees generated by an
application are known, it can be possible to prove certain performance
guarantees~\cite{AD17}.

Budgeted reverse search inherits a number of other features from reverse
search.  One can checkpoint and restart the overall process by waiting for
all processes to exhaust their budget and writing the descriptions of
unexplored nodes to a file.  The budget can be tuned dynamically based
on the number of unexplored subtrees available.  While the implementation
of~\cite{mplrs} is specific to vertex/facet enumeration, a generic
implementation of parallel budgeted reverse search (\mts) is also
available along with a tutorial~\cite{mts_tutorial}.
 
Implementing a budgeted reverse search application with \mts allows a
clean separation of the parallelization layer and application code.  This
permits independent development of the application and the parallelization 
layer. 
The current \mts implementation uses \mpi~\cite{mpi} and
dedicates a process as \emph{master} and another to handling output.
This overhead is insignificant
when many processes are available, but limits parallel efficiency when only
few processes are used.
We refer to Section~\ref{sec:experiments} below for further details on how \mptopcom employs \mts for enumerating triangulations.

\section{Triangulations of point configurations}
\label{sec:triangulations}
\noindent
Let $P\subset\RR^d$ be a finite set of $n$ points that affinely spans the entire space.
A \emph{(polyhedral) subdivision} $\Sigma$ of $P$ is a polytopal complex which covers the convex hull $\conv P$, such that the vertices of each cell form a subset of the given points $P$; cf.\ \cite[\S2.3.1]{Triangulations}.
The subdivision $\Sigma$ is \emph{regular} if it is induced by a height function $h:P\to\RR$ in the sense that the lower convex hull of
\[
\conv\bigl\{ (p,h(p)) \mid  p\in P \bigr\} \quad \subset \ \RR^{d+1}
\]
projects to $\Sigma$ by omitting the last coordinate.
The subdivision $\Sigma$ is a \emph{triangulation} if all its cells are simplices.
The set of all subdivisions of $P$ is partially ordered by refinement, and the triangulations are precisely the finest subdivisions.
Our goal in this section is to discuss and present algorithms for enumerating all regular triangulations of $P$.

For a given subdivision $\Sigma$ the set of all height functions which induce $\Sigma$ on $P$ is a relatively open polyhedral cone, the \emph{secondary cone} of $\Sigma$.
The secondary cone of $\Sigma$ is non-empty if and only if $\Sigma$ is regular.
The set of all secondary cones forms a polyhedral fan, the \emph{secondary fan} $\secondaryFan(P)$.
The relatively open secondary cones partition the space $\RR^n$ of height functions, i.e., the secondary fan is \emph{complete}.
Each non-empty secondary cone contains a $(d+1)$-dimensional linear subspace, the space of linealities of the secondary fan.
Fixing the heights on an affine basis in $P$ amounts to intersecting the secondary fan of $P$ in a way such that each cone in the resulting fan is pointed.
We call that $(n-d-1)$-dimensional pointed polyhedral fan the \emph{pointed secondary fan} of $P$.
It is unique up to linear transformations.

As a key fact the pointed secondary fan is \emph{polytopal}, i.e., it is the normal fan of a convex polytope of dimension $n-d-1$.
We quickly review the construction.
For a triangulation $\Delta$ of $P$ the \emph{GKZ-vector} is
\[
  \gkz_\Delta \ = \ \bigl( \gkz_\Delta(p) \mid p \in P \bigr) \enspace ,
\]
where $\gkz_\Delta(p)$ is the sum of the (Euclidean) volumes of those simplices in $\Delta$ which contain $p$ as a vertex.
The convex hull
\[
  \secondaryPoly(P) \ = \ \conv\SetOf{\gkz_\Delta}{\Delta \text{ triangulation of } P}
\]
of all GKZ-vectors is the \emph{secondary polytope} of $P$; its normal fan is the (pointed) secondary fan of $P$; cf.\ \cite[\S5.2.2]{Triangulations}.
The vertices of $\secondaryPoly(P)$ are precisely the GKZ-vectors of the regular triangulations.
As we assumed the point configuration $P$ is spanning, the dimension of its secondary polytope equals $n-d-1$; cf. \cite[\S5.1.3]{Triangulations}.

We will now sketch an algorithm by Imai et al.~\cite{Imai:2002} to enumerate (regular) triangulations of $P$ based on applying reverse search to the vertex--edge graph of $\secondaryPoly(P)$ (or a suitable supergraph).
See \cite[\S5.3.2]{Triangulations} for an account of the geometric facts.
We begin by defining a linear objective function on the secondary polytope $\secondaryPoly(P)$.
To this end choose a positive real number $M$.
Then the vector
\[
\lambda \ = \ (M^n,M^{n-1},\dots,M)
\]
of length $n$ defines a linear form on $\RR^n$.
For all sufficiently large $M\gg0$ that linear form is injective on the finite set $\smallSetOf{\gkz_\Delta}{\Delta \text{ triangulation of } P}$.
Moreover, if $M\gg0$, then comparing any two GKZ-vectors with respect to $\lambda$ amounts to checking the lexicographic ordering.
In particular, there is no need to determine any valid choices for $M$.
Like the GKZ-vectors the following relies on the choice of a fixed ordering of the points in $P$.
\begin{definition}\label{def:total}
  For any two triangulations, $\Delta$ and $\Delta'$, of $P$ we let $\Delta'>\Delta$ if
  \[
  \lambda(\gkz_{\Delta'}) \ > \ \lambda(\gkz_\Delta) \enspace ,
  \]
  or if $\lambda(\gkz_{\Delta'}) = \lambda(\gkz_\Delta)$ and $\Delta'$ is lexicographically larger than $\Delta$.
\end{definition}
This defines a total ordering on the set of all triangulations of $P$.
Since $\lambda$ induces a total ordering on the vertices of $\secondaryPoly(P)$ the lexicographic ordering is not required as a tie-breaker if restricted to regular triangulations.
However, it is important for nonregular triangulations; see Example~\ref{exmp:cube} below.

A \emph{flip} $f$ is a local modification of a triangulation, $\Delta$, of $P$ which yields another triangulation, $\Delta'$; we write $f=[\Delta \leadsto \Delta']$.
Each edge of the secondary polytope comes from a flip, but the converse does not hold \cite[\S5.3.1]{Triangulations}.
We call $f$ an \emph{up-flip} if $\Delta'>\Delta$.
Otherwise $\Delta'<\Delta$, and $f$ is a \emph{down-flip}.
The \emph{flip graph} $\Phi$ of $P$ is the graph whose nodes are the triangulations of $P$ and whose edges are given by the flips.
The flip graph $\Phi$ is directed with respect to up-flips.
Note that $\Phi$ is not necessarily connected \cite[\S7.3 and \S7.4]{Triangulations}.
However, the subgraph $\PhiReg$ induced on the subset of regular triangulations is always connected.

\begin{remark}\label{rem:caution_flips}
  It is known that if $f=[\Delta\leadsto\Delta']$ is a flip and $\Delta$ is regular, then $\Delta'$ does not need to be regular.
  That is, the connected component $\PhiRegComp$ of the regular triangulations may be strictly larger than $\PhiReg$.
  We call $\PhiRegComp$ the \emph{regular component} of the flip graph, and we call a triangulation \emph{sub-regular} if it is reachable from a regular triangulation via down-flips.
  Note that sub-regularity depends on the choice of ordering of triangulations.
  Moreover, even if $\Delta$ and $\Delta'$ both are regular then $f$ does not necessarily correspond to an edge of $\secondaryPoly(P)$.
  In general the vertex--edge graph of $\secondaryPoly(P)$ may be a proper subgraph of $\PhiReg$ as it may have fewer edges.
\end{remark}

The reverse search algorithm (cf.\ Algorithm~\ref{alg:reverse}) works on a graph which is given implicitly by means of an oracle.
It is only after the termination of the procedure that the entire graph (or rather a spanning tree) is known.
Therefore, we introduce the following notation.
Our adjacency oracle $\PhiReg(\Delta,j)$ for the regular flip graph $\PhiReg$ at a given regular triangulation $\Delta$ of $P$ returns a regular triangulation which can be obtained from $\Delta$ by some down-flip, and it is the $j$th one in the total ordering from Definition~\ref{def:total}.
If there are less than $j$ regular triangulations accessible via down-flips, then the oracle returns $\mynull$.
We call our predecessor function $\pi$, and it returns the maximal triangulation which can be obtained via an up-flip, if it exists.
There is a unique regular triangulation $\Delta^*$ whose GKZ-vector is maximal among all triangulations, regular or not.
We set $\pi(\Delta^*)=\mynull$, and we call $\Delta^*$ the \emph{optimal} triangulation of $P$.
Recall that our ordering of the triangulations and thus this notion of optimality relies on the choice of an ordering of the points in $P$.

Algorithm~\ref{alg:down-flip} computes all regular triangulations of $P$.
While it can easily be modified to, e.g., obtain all not necessarily regular triangulations which can be obtained from $\Delta^*$ by down-flips, the computation of the regular triangulations is probably the most interesting use case.
\begin{algorithm}
\begin{algorithmic}[1]
  \Procedure{DFRS}{$P$}
  \State $\Delta \gets$ some regular triangulation of $P$ \label{step:bb}
  \While {$\pi(\Delta) \neq \mynull$}
    \State $\Delta \gets \pi(\Delta)$
  \EndWhile
  \State \Call{RS}{$\Delta,\PhiReg,\pi$} \label{step:reverse}
\EndProcedure
\end{algorithmic}
\caption{Down-flip reverse search}
\label{alg:down-flip}
\end{algorithm}

\newcommand{\maxDeg}{\delta_{\max}}
\newcommand{\maxTriangSpace}{s_{\max}}
\newcommand{\nTriangs}{N}
To analyze Algorithm \ref{alg:down-flip} we introduce three parameters.
First, $\maxDeg$ denotes the maximum degree of a regular triangulation in the flip graph.
This depends on the point configuration $P$ and the ordering of its points, as this defines the optimal triangulation $\Delta^*$.
The following bounds hold:
\begin{equation}\label{eq:maxdeg}
  n-d-1 \ \leq \ \maxDeg \ \leq \ {n \choose d+2} \enspace .
\end{equation}
The lower bound is the dimension of the secondary polytope (cf.\ \cite[Corollary 5.3.2]{Triangulations}), and that binomial coefficient to the right is an upper bound for the number of circuits of $P$.
\begin{remark}
  The upper bound in \eqref{eq:maxdeg} is extremely coarse, and any improvement would be very interesting.
  In practice, the degree seems to be close to the lower bound, which means that a typical secondary polytope is somewhat close to being simple.
\end{remark}
The second parameter, $\maxTriangSpace$, is the maximal number of facets a triangulation of $P$ may have.
It is trivially bounded by
\begin{equation}\label{eq:maxtriangspace}
  \maxTriangSpace \ \leq \ {n \choose d+1} \enspace .
\end{equation}
Yet, if $P$ is a lattice polytope, we obtain a much better bound from the observation that each lattice simplex must have normalized volume at least one.
In this case, this entails
\[
  \maxTriangSpace \ \leq d! \cdot \vol(\conv P) \enspace ,
\]
where $\vol$ is the Euclidean volume.
For instance, this is tight for all cubes $[0,1]^d$.
Finally, the third parameter, $\nTriangs$, is the number of regular triangulations of $P$.
The trivial upper bound
\begin{equation}\label{eq:ntriangs}
  \nTriangs \ \leq \ 2^{\maxTriangSpace} \ \leq \ 2^{n \choose d+1}
\end{equation}
is due to the encoding of a triangulation as the set of its maximal cells.
Note that the upper bound does not benefit from counting regular triangulations only.
That is, the same bound also holds for the number of all triangulations of $P$.

The running time of Algorithm~\ref{alg:down-flip} is dominated by the combined costs for checking the regularity via linear optimization.
Hence it is natural to measure the runtime complexity in the number of LPs to be solved.
In \mptopcom linear programs are solved via \topcom's interface to \cddlib \cite{cddlib}.
This leads to the following straightforward analysis, which generalizes \cite[Theorem 3.6]{AF93} to arbitrary dimensions.
For each fixed ordering of the point set $P$ the beneath-and-beyond method provides the corresponding placing triangulation, which is known to be regular; cf.\ \cite[Lemma 4.3.5]{Triangulations}.
In \mptopcom this serves as the initial triangulation required in Step~\ref{step:bb}.
The following is essentially \cite[Theorem~13]{Imai:2002}.

\begin{theorem}\label{thm:complexity}
  Given some initial regular triangulation, the down-flip reverse search algorithm computes all regular triangulations of~$P$.
  Its running time is bounded from above by the time required to solve
  $
  O(\maxDeg\cdot\nTriangs)
  $
  linear programs in dimension $n$ with at most ${n \choose d}$ linear constraints.
  The space requirement is bounded from above by
  $
  O\left(d \cdot \maxTriangSpace \right)
  $
  plus the space required for solving the linear programs.
\end{theorem}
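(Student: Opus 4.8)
The plan is to derive the statement from the general correctness and complexity analysis of reverse search in \cite{AF93}, specialized to the adjacency oracle $\PhiReg$, the predecessor $\pi$, and the root $\Delta^*$ set up above; this is essentially the route of \cite[Theorem~13]{Imai:2002} combined with \cite[Theorem~3.6]{AF93}, and the only points requiring care are the dimensions of the linear programs and the encoding size of a triangulation.

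First I would verify the hypotheses required by Algorithm~\ref{alg:reverse}. For the adjacency side this is immediate: if $\Delta'=\pi(\Delta)$ arises from an up-flip $\Delta\leadsto\Delta'$, then $\Delta'\leadsto\Delta$ is a down-flip between two regular triangulations, so $\Delta$ occurs in the list enumerated by $\PhiReg(\Delta',\cdot)$, i.e.\ $\PhiReg(\Delta',j)=\Delta$ for some $j$. For the predecessor side I must show that iterating $\pi$ from any regular triangulation reaches $\Delta^*$. Since $\lambda$ is generic on the vertex set of $\secondaryPoly(P)$, every non-optimal vertex $\Delta$ has an incident edge of $\secondaryPoly(P)$ along which $\lambda$ increases; edges are flips, so $\Delta$ admits an up-flip to another regular triangulation, whence the set of regular up-flips of $\Delta$ is non-empty and $\pi(\Delta)$ is well defined and regular. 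Each application of $\pi$ strictly increases the total order of Definition~\ref{def:total}, the set of triangulations is finite, and the only regular triangulation without a regular up-flip is $\Delta^*$; hence the iteration terminates at $\Delta^*$, which is exactly what the \textbf{while} loop of Algorithm~\ref{alg:down-flip} computes before the call to \textsc{RS}. With these two properties, \cite[\S3]{AF93} guarantees that \textsc{RS} outputs every regular triangulation of $P$ exactly once, establishing the first sentence. Connectedness of $\PhiReg$, recalled above, is what makes the search tree span \emph{all} regular triangulations.

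For the running time I would count linear-program calls, following the amortized bookkeeping of \cite[Theorem~3.6]{AF93}. The initial walk-up performs at most $\nTriangs$ applications of $\pi$, since the triangulations it visits are pairwise distinct. In the reverse-search phase each of the $\nTriangs$ regular triangulations is visited once and is the backtrack target of at most $\maxDeg$ neighbors; at a visited node one enumerates its (at most $\maxDeg$) flips, classifies each as an up- or a down-flip --- a purely combinatorial comparison of $\gkz$-vectors, using that $M\gg0$ turns $\lambda$ into the lexicographic order --- and tests the resulting candidate triangulations for regularity. The only step needing a linear program is the regularity test, invoked $O(\maxDeg)$ times per node, for a total of $O(\maxDeg\cdot\nTriangs)$ linear programs, with the walk-up absorbed in the same bound. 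Each such program decides whether a triangulation $\Delta$ has a convex height function: its variables are the $n$ heights and its constraints are the local-convexity inequalities, one per interior $(d-1)$-face of $\Delta$, of which there are at most ${n\choose d}$; this gives the stated dimensions.

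Finally, the space bound: \textsc{RS} keeps no persistent data beyond the current node, stored as the list of its at most $\maxTriangSpace$ maximal simplices, each recorded by $d+1$ point indices, i.e.\ $O(d\cdot\maxTriangSpace)$, and everything else is the workspace of the linear-program solver. The step I expect to be the main obstacle is the per-node linear-program count: one must argue that evaluating $\pi$ on the neighbors of a node --- needed for the test in line~\ref{Step:duplicate_work} of Algorithm~\ref{alg:reverse} --- does not trigger more than $O(\maxDeg)$ regularity tests per node, and, relatedly, that $\pi$ always returns a \emph{regular} triangulation so that both the walk-up and every backtracking step stay inside $\PhiReg$; resolving this cleanly is where the genericity of $\lambda$ and the connectedness of $\PhiReg$ do the real work.
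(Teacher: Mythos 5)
Your proposal follows the same route as the paper's proof: specialize the reverse-search correctness of \cite{AF93} to the oracle $\PhiReg$ and the predecessor $\pi$, charge one regularity test to each flip leaving a visited regular triangulation (at most $\maxDeg$ per node, hence $O(\maxDeg\cdot\nTriangs)$ LPs in total, with the initial walk-up absorbed), describe the LP exactly as the paper does ($n$ height variables, one local-convexity constraint per interior codimension-one cell, at most $\binom{n}{d}$ of them), and bound the space by the encoding of a single triangulation as at most $\maxTriangSpace$ cells with $d+1$ point indices each. One of the two worries you raise at the end is moot: evaluating $\pi$ on the neighbors in line~\ref{Step:duplicate_work} of Algorithm~\ref{alg:reverse} triggers no linear programs at all, because $\pi$ as defined returns the maximal triangulation obtainable by an up-flip \emph{without} any regularity restriction, so it is computed purely combinatorially (enumerate the flips and compare GKZ-vectors lexicographically, with the tie-break of Definition~\ref{def:total}); this is precisely why the paper's count of one LP per (visited node, flip) pair suffices.

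Your second worry, however, marks a genuine gap in the write-up, and your intermediate deduction does not close it. From the existence of a $\lambda$-increasing edge of $\secondaryPoly(P)$ at a non-optimal vertex you may conclude that \emph{some} up-flip of a regular $\Delta$ leads to a regular triangulation, but $\pi$ selects the \emph{maximal} up-flip, and by Remark~\ref{rem:caution_flips} an up-flip from a regular triangulation may land on a nonregular one; so ``the set of regular up-flips is non-empty'' does not give ``$\pi(\Delta)$ is well defined and regular'', which is what you assert mid-proof and then (rightly) retract at the end. Yet this is exactly what is needed for the tree rooted at $\Delta^*$, built from $\PhiReg$ and $\pi$, to span \emph{all} regular triangulations, and also for your bound of at most $\nTriangs$ steps in the walk-up before Step~\ref{step:reverse} of Algorithm~\ref{alg:down-flip}. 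The paper does not reprove this point; it disposes of it by citation, deriving correctness from the fact that every regular triangulation reaches the optimal one via up-flips \cite[Theorem~5.3.2]{Triangulations} together with the correctness of reverse search, the theorem being essentially \cite[Theorem~13]{Imai:2002}. To complete your argument you should either invoke that monotone connectivity result in the same way, or prove directly that the lexicographically maximal up-flip of a regular, non-optimal triangulation is again regular; the genericity of $\lambda$ and the connectedness of $\PhiReg$ alone, as you state them, do not yield this.
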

Here and below we use a unit cost model for representing indices corresponding to points in $P$.
This is adequate since, if $n$ does not fit into a machine size \texttt{Int}, there will be far too many triangulations to allow for any reasonable enumeration.
\begin{proof}
  The correctness follows from (1) the fact that from each regular triangulation we can reach the optimal triangulation $\Delta^*$ via up-flips (cf.\ \cite[Theorem 5.3.2]{Triangulations}), and (2) the correctness of the reverse search method (cf.\ Algorithm~\ref{alg:reverse}).
  
  For the asymptotic running time we can neglect the time for up-flipping from the initial triangulation to $\Delta^*$ since we will revisit all triangulations on that path during the reverse search.
  Each flip from a regular triangulation to another triangulation, regular or not, is considered at most once.
  The total number of such flips is bounded by $\maxDeg\cdot\nTriangs$.
  For each flip we need to decide if the resulting triangulation is regular or not.
  Given a triangulation $\Delta$ its regularity can be determined by solving a linear program in dimension $n$, the number of points.
  Each cell of codimension one gives rise to one linear constraint, and there are not more than ${n \choose d}$ of these.

  The space required comes from storing one triangulation as the set of its maximal cells.
  Each such maximal cell in turn is a subset of the vertices of cardinality $d{+}1$, encoded as a list of integers.
  As pointed out above each of these integers is assumed to be small, whence it is accounted for by a constant space requirement in our analysis.
\end{proof}

By employing \eqref{eq:maxdeg}, \eqref{eq:maxtriangspace} and \eqref{eq:ntriangs} the above complexities can be translated into (horrendous) bounds in terms of the input parameters $d$ and $n$.
Note that, by \cite[Corollary 5.3.11]{Triangulations}, the number of up-flips to the optimal triangulation does not exceed
\[
\min\left\{ (d+2)\binom{n}{\lfloor \frac{d}{2}+1 \rfloor},\, \binom{n}{d+2} \right\} \enspace .
\]
\begin{example} \label{example:moae}
  Figure~\ref{figure:moae_tree} shows the secondary polytope of the ``mother of all examples'' (MOAE) from \cite[Example 5.5.7]{Triangulations}; see also \cite{Pfeifle:MOAE}.
  This is a configuration of six points in $\RR^2$ with three vertices of the convex hull and three points in the interior of the outer triangle.
  There are 18 triangulations, 16 of which are regular.
  The 18 triangulations are grouped into five orbits, and the coloring of the vertices in Figure~\ref{figure:moae_tree} shows those orbits.
  The two nonregular triangulations (are sub-regular and) share the same GKZ-vector, and these occur as the central blue point in a hexagon with yellow vertices.
  The optimal triangulation corresponds to the one black vertex (at the top and toward the back).
  The thick edges form the reverse search tree of the graph $\PhiRegComp=\Phi$; the optimal triangulation is the root.
\end{example}
\begin{figure}[htb]
\includegraphics[scale=.5]{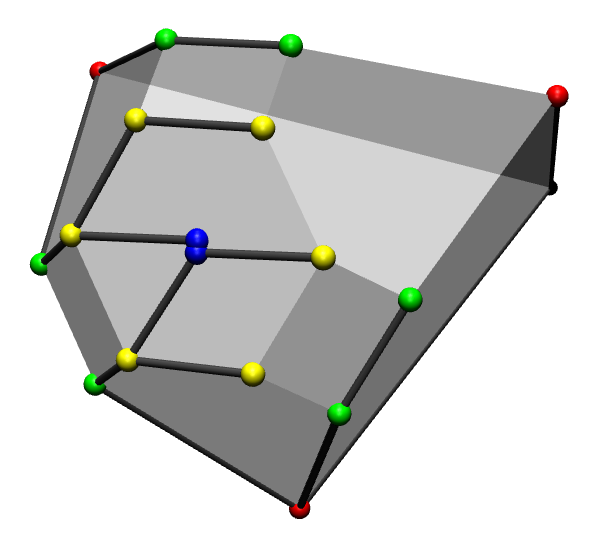}
\caption{Reverse search tree of MOAE from Example~\ref{example:moae}.  The two blue points correspond to the two nonregular triangulations.  Since they share the same GKZ-vector they actually coincide; here we chose to draw them slightly apart in order to reveal the tree.}
\label{figure:moae_tree}
\end{figure}
\begin{remark}
  A triangulation of $P$ is \emph{full} if it uses all the points in $P$.
  It was proved in \cite[\S5]{Imai:2002} that the subgraph of $\PhiReg$ induced on the full triangulations is connected; see also \cite{PourninLiebling:2007} and \cite[Corollary 5.3.14]{Triangulations}.
  As a consequence, the down-flip reverse search algorithm can be applied to enumerate the full triangulations only.
\end{remark}

\section{Triangulations up to symmetry}
\label{sec:symmetry}
\noindent
In many applications for which the set of all (regular) triangulation is sought after the point set exhibits a great deal of symmetry.
Typical examples are the set of vertices of a high-dimensional cube (cf.\ Section~\ref{subsec:cube}) or the set of lattice points in a dilated simplex (cf.\ Section~\ref{subsec:dilated-simplex}).
It is natural to exploit this symmetry, and this is a standard feature of \topcom and \mptopcom.
An obvious drawback of enumerating via reverse search is that the group of, say, affine automorphisms of the point set does not operate on the reverse-search tree.
That is, applying reverse search up to symmetry requires some extra considerations; see Bremner, Dutour Sikiri\'c and Sch\"urmann \cite[\S7]{BremnerDutourSchuermann:2009} for a brief discussion in the context of convex hull computations.
The simple idea is to apply the reverse search scheme to the graph whose nodes correspond to the orbits of the (possibly regular) triangulations and whose edges are induced by the edges in $\Phi$, i.e., by flips.
% One core problem remaining is to find a suitable encoding of these orbits such that reverse search is applicable in practice.

Following Imai et al.~\cite{Imai:2002} we suggest to adapt the approach via GKZ-vectors from Section~\ref{sec:triangulations} to the symmetric setting.
Let $P\subset\RR^d$ be a finite point set, and let
\[
\group \ \leq \ \text{SL}_d(\RR) \rtimes \RR^d
\]
be a finite group of affine unimodular automorphisms which acts on the set $P$ by permutations.
In particular, this action is faithful, and it preserves the volume.
Clearly, there is an induced action of $\group$ on the set of all triangulations of $P$, which leaves the set of regular triangulations invariant.
\begin{lemma}\label{lemma:gkzaction}
  Let $\groupElem$ be an element of\/ $\group$, and let $\Delta$ be a triangulation.
  Then we have
  \[
  \gkz_{\groupElem\cdot\Delta}(\groupElem(p)) \ = \ \gkz_\Delta(p)
  \]
  for all points $p\in P$.
\end{lemma}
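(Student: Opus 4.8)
The plan is to unwind the definition of the $\group$-action on triangulations together with the definition of the GKZ-vector, and to observe that two compatible bijections — one on maximal simplices, one on their vertices — line up so that the two sums defining $\gkz_{\groupElem\cdot\Delta}(\groupElem(p))$ and $\gkz_\Delta(p)$ agree term by term.

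First I would recall that the induced action of $\groupElem$ on a triangulation $\Delta$ is by applying $\groupElem$ cell-wise: a maximal simplex $S=\conv\{p_0,\dots,p_d\}$ of $\Delta$ is sent to $\groupElem(S)=\conv\{\groupElem(p_0),\dots,\groupElem(p_d)\}$, and these images are exactly the maximal cells of $\groupElem\cdot\Delta$. Since $\groupElem$ acts on $P$ by permutations and is an affine bijection of $\RR^d$, it carries $\conv P$ onto itself and sends faces of $\Delta$ to faces of $\groupElem\cdot\Delta$ compatibly with intersections, so $\groupElem\cdot\Delta$ is again a triangulation; moreover $S\mapsto\groupElem(S)$ is a bijection from the maximal simplices of $\Delta$ onto those of $\groupElem\cdot\Delta$, and $p$ is a vertex of $S$ if and only if $\groupElem(p)$ is a vertex of $\groupElem(S)$. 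Next, because $\groupElem\in\text{SL}_d(\RR)\rtimes\RR^d$ its linear part has determinant $1$ and translations preserve volume, so $\vol(\groupElem(S))=\vol(S)$ for every simplex $S$ (this volume-preservation is already noted in the text). Combining these, the sum of the volumes of the maximal simplices of $\Delta$ having $p$ as a vertex equals the sum of the volumes of the maximal simplices of $\groupElem\cdot\Delta$ having $\groupElem(p)$ as a vertex, which is precisely $\gkz_{\groupElem\cdot\Delta}(\groupElem(p))=\gkz_\Delta(p)$.

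There is no genuine obstacle here; the statement is essentially a bookkeeping exercise once the action is spelled out. The only points deserving a word of care are that $\groupElem\cdot\Delta$ is genuinely a triangulation (handled above via $\groupElem$ being an affine automorphism permuting $P$) and that the identity is asserted for \emph{all} $p\in P$, including points that are not full (i.e.\ are vertices of no maximal simplex of $\Delta$): for such $p$ both sides are empty sums, hence $0$, which is consistent since $\groupElem(p)$ is then a vertex of no maximal simplex of $\groupElem\cdot\Delta$.
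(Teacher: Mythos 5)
Your proof is correct and follows the same route as the paper, which simply notes that the identity is an immediate consequence of $\group$ preserving the volume; you have spelled out the bookkeeping (the bijection of maximal simplices containing $p$ with those containing $\groupElem(p)$, plus volume invariance of unimodular affine maps) that the paper leaves implicit.
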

\begin{proof}
 This is an immediate consequence of $\group$ preserving the volume.
\end{proof}

The main challenge in implementing reverse search for enumerating triangulations is to find a fast implementation for computing the canonical representatives.
To this end we will to some extent deviate from standard wisdom in the algorithmic theory of permutation groups as presented, e.g., in the monograph \cite{Seress:2003} by Seress.
One reason is that our groups are fairly small.
In most relevant cases $\group$ contains a few thousand elements, while the set of triangulations on which $\group$ acts often has millions of orbits.
Moreover, despite the fact that the point set is symmetric, few triangulations exhibit much symmetry.
This entails that most orbits are about the size of the entire group.
See Section~\ref{sec:experiments} for more details and precise numbers.

In order to employ reverse search for traversing (a connected component of) the flip graph, the total ordering on the set of triangulations introduced in Definition~\ref{def:total} is crucial.
This leads us to represent the $\group$-orbit of a triangulation $\Delta$ of $P$ by
\begin{equation}\label{eq:can}
  \canonical(\Delta) \ := \ \max (\group\cdot\Delta) \enspace .
\end{equation}
That is, the \emph{canonical representative} $\canonical(\Delta)$ of an orbit $\group\cdot\Delta$ is characterized as follows:
\begin{inparaenum}
\item its GKZ-vector is lexicographically maximal among all GKZ-vectors of the triangulations in $\group\cdot\Delta$, and
\item among all triangulations in $\group\cdot\Delta$ which satisfy (i) it is lexicographically maximal, considered as a characteristic vector of maximal simplices.
\end{inparaenum}

In particular, $\canonical$ maps triangulations to triangulations in the same orbit such that $\canonical(\Delta)=\canonical(\Delta')$ if and only if $\group\cdot\Delta=\group\cdot\Delta'$, and it follows that $\canonical(\canonical(\Delta))=\canonical(\Delta)$.
Recall that the above definition still depends on the ordering of the points in $P$ as well as on the ordering of the maximal simplices.
If the triangulation is regular, then the entire orbit $\group\cdot\Delta$ consists of regular triangulations and thus their GKZ-vectors are pairwise distinct.

The following example shows that the GKZ-vectors of nonregular triangulations may behave in an unexpected way, and this shows that condition (ii) is necessary.
\begin{example}\label{exmp:cube}
  Let $I^4$ be the set of 16 vertices of the 4-dimensional $0/1$-cube.
  Here and below we use $I=[0,1]$ to denote the unit interval on the real line.
  We may read each vertex as a bitstring of length four, and in this way we obtain a natural encoding in terms of the hexadecimal digits $\mathtt{0}$ through $\mathtt{F}$.
  The full group $\group$ of affine unimodular automorphisms is the wreath product of a cyclic group of order two (corresponding to a reflection which sends $x_i$ to $1-x_i$) by the symmetric group $\Sym(4)$ (acting on the four neighbors of a fixed vertex); the total size of $\group$ equals $384$.
  Consider the triangulation $\Delta$ whose maximal simplices read
  \[
  \begin{array}{ccccccccc}
    \mathtt{01278} & \mathtt{0157D} & \mathtt{0178D} & \mathtt{02478} & \mathtt{0457D} & \mathtt{0478D} &
    \mathtt{1237A} & \mathtt{1278A} & \mathtt{137AB}\\
    \mathtt{178AB} & \mathtt{178BD} & \mathtt{189BD} & \underline{\mathtt{2467C}} & 
    \underline{\mathtt{2478C}} & \underline{\mathtt{267AC}} & \underline{\mathtt{278AC}} &
    \mathtt{478CD} & \mathtt{67ACE}\\
    \mathtt{78ABC} & \mathtt{78BCD} & \mathtt{7ABCE} & \mathtt{7BCDF} & \mathtt{7BCEF} \\
  \end{array}
  \]
  There is a flip, $f=[\Delta\rightsquigarrow\Delta']$, to another triangulation, $\Delta'$, which replaces the underlined simplices with
  \[
  \begin{array}{cccc}
    \mathtt{2467A} & \mathtt{478AC} & \mathtt{467AC} & \mathtt{2478A}
  \end{array}.
  \]
  The triangulations $\Delta$ and $\Delta'$ lie in different $\group$-orbits (both of which have the maximal length 384), and their respective GKZ-vectors are
  \begin{align*}
    &(6,10,8,2,6,2,3,23,14,1,9,10,11,10,3,2)
    \intertext{and}
    &(6,10,6,2,8,2,3,23,14,1,11,10,9,10,3,2) \enspace.
    \intertext{Yet the lexicographically maximal GKZ-vector in both orbits is the same, and it reads}
    &(23,3,2,8,2,6,10,6,2,3,10,9,10,11,1,14) \enspace.
  \end{align*}
  It follows that neither $\Delta$ nor $\Delta'$ are regular (but they turn out to be sub-regular).
  The same holds for the canonical representatives $\canonical(\Delta)$ and $\canonical(\Delta')$.

  This example illustrates that, in general, the GKZ-vectors cannot distinguish between $\group$-orbits.
  Moreover, it also shows that GKZ-vectors alone do not suffice to establish a total ordering on the flip graph.
  For a tie-breaker we need, e.g., the lexicographic ordering as in Definition~\ref{def:total}. 
% Delta:
% Initial triang: [0->16,5:{{6,7,10,12,14},{7,10,11,12,14},{7,8,10,11,12},{7,11,12,14,15},{7,11,12,13,15},{0,1,5,7,13},{1,8,9,11,13},{1,2,3,7,10},{2,7,8,10,12},{2,4,7,8,12},{2,6,7,10,12},{2,4,6,7,12},{0,2,4,7,8},{0,1,2,7,8},{1,2,7,8,10},{0,4,5,7,13},{4,7,8,12,13},{7,8,11,12,13},{1,7,8,10,11},{1,3,7,10,11},{0,4,7,8,13},{0,1,7,8,13},{1,7,8,11,13}}]
% Orbit size: 384
% Is regular: 0
%
% Delta':
% Initial triang: [0->16,5:{{6,7,10,12,14},{7,10,11,12,14},{7,8,10,11,12},{7,11,12,14,15},{7,11,12,13,15},{0,1,5,7,13},{1,8,9,11,13},{4,7,8,10,12},{4,6,7,10,12},{1,2,3,7,10},{2,4,7,8,10},{2,4,6,7,10},{0,2,4,7,8},{0,1,2,7,8},{1,2,7,8,10},{0,4,5,7,13},{4,7,8,12,13},{7,8,11,12,13},{1,7,8,10,11},{1,3,7,10,11},{0,4,7,8,13},{0,1,7,8,13},{1,7,8,11,13}}]
% Orbit size: 384
% Is regular: 0
\end{example}

\section{Canonical representatives via switch tables}
\label{sec:canonical}
\noindent
The purpose of this section is to explain why the canonical representatives of triangulations are chosen as in~\eqref{eq:can} and how this can be exploited.
To this end we start out with a more abstract setting.
We assume that the finite group $\group$ acts on the set $[m]:=\{0,\ldots,m-1\}$.
The following concept is our main tool.
\begin{definition}\label{def:switch}
  An \emph{$m$-switch table} for $\group$ is a function
  \[
  \begin{array}{cccl}
    \switch: & [m]\times[m] & \to & \group\\
    & (i,j) & \mapsto &
    \begin{cases}
      \groupElem\in \group,\mbox{ with } \groupElem(k)=k \mbox{, for } k<i \mbox {, and } \groupElem(j)=i & \mbox{if it exists}\\
      \id & \mbox{otherwise} \enspace .
    \end{cases}
  \end{array}
  \]
\end{definition}
Note that we have $j>i$ if $\switch(i,j)\neq\id$. We denote by $\switchTableSize(\switch)$ the \emph{depth} of the switch table, which is defined as
\[
\switchTableSize(\switch)\ :=\ \max\SetOf{i+1}{\text{there is an index } j \text{ with } \switch(i,j)\not=\id}\enspace .
\]
That is, $\switchTableSize$ is the index of the first row of $\switch$ which only contains the identity.
In general, a switch table is by no means unique as there may be many candidates in $\group$ for $\switch(i,j)$.

The $i$th row of a switch table tells us which elements $\ge i$ can be moved to position $i$ while leaving the first $i$ elements of $[m]$ unchanged.
That is, the switch $\switch(i,j)$ lies in the stabilizer
\[
\group_{[i]} \ := \ \SetOf{g\in\group}{g(j)=j \text{ for all } j<i} \enspace .
\]
Formally, we have $\group_{[0]}=\group$.
Moreover, the stabilizer $\group_{[m]}$ is the kernel of the action of $\group$ on the set $[m]$, and this is a normal subgroup.
In particular, that action is faithful if and only if $\group_{[m]}$ is the trivial group.
Let
\[
\sigma_i \ := \ 1 + \# \SetOf{\switch(i,j)\neq\id}{j>i}
\]
be one plus the number of nontrivial entries in the $i$th row of the switch table.
\begin{proposition}\label{prop:transversal}
  The $i$th row of the switch table forms a (left) transversal of the subgroup $\group_{[i+1]}$ in~$\group_{[i]}$.
  In particular, the number $\sigma_i$ is the index of $\group_{[i+1]}$ in $\group_{[i]}$.
\end{proposition}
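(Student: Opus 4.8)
The plan is to reduce everything to the orbit--stabilizer theorem applied to the point $i$ under the action of $\group_{[i]}$ on $[m]$. First I would record two elementary facts. Since every element of $\group_{[i]}$ fixes $0,\dots,i-1$ pointwise, it bijects the block $\{i,i+1,\dots,m-1\}$ onto itself; hence the orbit $\group_{[i]}\cdot i$ is contained in that block, and the stabilizer of $i$ inside $\group_{[i]}$ is exactly $\group_{[i+1]}$. By orbit--stabilizer this already gives $[\group_{[i]}:\group_{[i+1]}]=\#(\group_{[i]}\cdot i)$, so the second assertion of the proposition will follow once I count the orbit.

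The second step is the key bookkeeping lemma: for $j>i$ we have $\switch(i,j)\neq\id$ if and only if $j\in\group_{[i]}\cdot i$. Indeed, if $\switch(i,j)\neq\id$ then Definition~\ref{def:switch} supplies some $\groupElem\in\group$ with $\groupElem(k)=k$ for $k<i$ and $\groupElem(j)=i$; thus $\groupElem\in\group_{[i]}$ and $\groupElem^{-1}(i)=j$, so $j$ lies in the orbit of $i$. Conversely, if $j\in\group_{[i]}\cdot i$, pick $h\in\group_{[i]}$ with $h(i)=j$; then $h^{-1}\in\group_{[i]}$ carries $j$ to $i$, and since $j\neq i$ this element is not $\id$, so the ``if it exists'' branch of Definition~\ref{def:switch} applies and $\switch(i,j)\neq\id$. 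Combining this with the first step, the set $\{i\}\cup\{\,j>i:\switch(i,j)\neq\id\,\}$ is precisely the orbit $\group_{[i]}\cdot i$. Counting, its cardinality is $1+(\sigma_i-1)=\sigma_i$, whence $\sigma_i=\#(\group_{[i]}\cdot i)=[\group_{[i]}:\group_{[i+1]}]$, which is the ``in particular'' clause.

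For the transversal statement I would consider the $\sigma_i$ elements consisting of $\id$ together with the nontrivial entries $\switch(i,j)$ in row $i$. They all lie in $\group_{[i]}$, and they are pairwise distinct: a nontrivial entry is never $\id$, and if $\switch(i,j)=\switch(i,j')$ then that common permutation carries both $j$ and $j'$ to $i$, forcing $j=j'$. Since a subset of $\group_{[i]}$ whose size equals the index $[\group_{[i]}:\group_{[i+1]}]$ is a transversal as soon as its members represent pairwise distinct cosets of $\group_{[i+1]}$, it remains only to verify the latter. Using that $\group_{[i+1]}$ is the point stabilizer of $i$, two elements $g,g'$ of $\group_{[i]}$ lie in the same coset exactly when $g^{-1}(i)=g'^{-1}(i)$; and for our representatives $\switch(i,j)^{-1}(i)=j$ while $\id^{-1}(i)=i$, so these values are pairwise distinct by construction. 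Hence row $i$, with $\id$ adjoined, is a transversal of $\group_{[i+1]}$ in $\group_{[i]}$, and by the count above its size $\sigma_i$ is the index.

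I do not expect a genuine obstacle here: the argument is short once the right invariant is identified. The one point that needs care is the coset bookkeeping in the last paragraph. The definition pins $\switch(i,j)$ down only through its action $j\mapsto i$, so the correct separating quantity for the representatives is the value $g^{-1}(i)$ (equivalently, which coset of the stabilizer they lie in), rather than $g(i)$; keeping this straight is what makes the distinctness of cosets immediate. The numerical conclusion $\sigma_i=[\group_{[i]}:\group_{[i+1]}]$ is unaffected by this, and is also insensitive to the usual left/right convention for the coset space.
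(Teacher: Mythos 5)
Your proof is correct, and it takes a recognizably different route from the paper's. The paper argues the covering half directly and only for $i=0$: given $g\in\group\setminus\group_{[1]}$ it picks $j$ with $g$ moving $j$ to $0$, invokes the definition to get a nontrivial $\switch(0,j)$ in the same coset, and then handles general $i$ by induction, observing that erasing the $0$th row and column yields a switch table for $\group_{[1]}$; the index statement then falls out of the transversal property. You instead treat all $i$ uniformly: identify $\group_{[i+1]}$ as the point stabilizer of $i$ in $\group_{[i]}$, show that the nontrivial positions in row $i$ are exactly the orbit $\group_{[i]}\cdot i$ minus $\{i\}$, get $\sigma_i=[\group_{[i]}:\group_{[i+1]}]$ from orbit--stabilizer, and then conclude the transversal property from pairwise-distinct cosets plus the count. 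What your version buys is that it avoids the induction step and makes the injectivity half explicit (distinct switches represent distinct cosets), which the paper's proof leaves tacit; what the paper's version buys is that surjectivity onto the cosets is established directly rather than by counting. One small point of bookkeeping: your criterion ``same coset iff $g^{-1}(i)=g'^{-1}(i)$'' characterizes left cosets of the stabilizer only under the convention in which the left factor of a product acts first (equivalently, it characterizes right cosets under the standard composition $(gh)(x)=g(h(x))$). The paper implicitly uses the former convention -- its own step ``$s^{-1}g$ lies in $H$'' from $s\cdot j=0$ and $g\cdot j=0$ only parses that way -- so your invariant matches the paper's reading of ``left transversal''; and as you note, the index statement and the later use in Corollary~\ref{cor:uniqueness} are insensitive to this choice.
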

\begin{proof}
  Consider the case $i=0$, where we need to show that the $0$th row of the switch table gives a transversal of $H:=\group_{[1]}$ in $\group=\group_{[0]}$.
  The group $\group$ acts transitively on the left cosets of $H$ by multiplication on the left.
  Let $g$ be an element in the complement $\group\setminus H$.
  If this does not exist there is nothing to show.
  Since $g$ does not stabilize $i=0$ there is an index $j>0$ such that $g\cdot j = 0$.
  By definition of the switch table there must also be a switch $s=s(0,j)$ with $s\cdot j = 0$.
  We obtain that $s^{-1}g$ lies in $H$ and thus $sH=gH$, which proves the claim for $i=0$.

  Erasing the $0$th row and the $0$th column of the switch table yields a switch table for $H=\group_{[1]}$.
  This shows that, inductively, the above argument also resolves the general case.
\end{proof}

\begin{corollary}\label{cor:uniqueness}
  For each element $g\in\group$ there is a unique sequence $\switch(0,j_0),\switch(1,j_1),\dots,\switch(m-1,j_{m-1})$ of switches such that the product
  \[
  \switch(0,j_0)\switch(1,j_1)\cdots\switch(m-1,j_{m-1})g^{-1}
  \]
  lies in the kernel $\group_{[m]}$.
\end{corollary}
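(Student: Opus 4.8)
The plan is to derive Corollary~\ref{cor:uniqueness} directly from Proposition~\ref{prop:transversal} by an iterated coset-decomposition argument. The guiding principle is that the tower of stabilizers
\[
\group \ = \ \group_{[0]} \ \geq \ \group_{[1]} \ \geq \ \cdots \ \geq \ \group_{[m]}
\]
together with the transversals supplied by the rows of the switch table gives, by the standard Schreier-type factorization, a unique way to write any coset of $\group_{[m]}$ in $\group$ as an ordered product of switches.

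First I would fix $g\in\group$ and show existence. Set $g_0:=g$. By Proposition~\ref{prop:transversal} the $0$th row is a transversal of $\group_{[1]}$ in $\group_{[0]}=\group$, so there is a unique switch $\switch(0,j_0)$ with $\switch(0,j_0)\group_{[1]} = g_0\group_{[1]}$; hence $g_1 := \switch(0,j_0)^{-1}g_0 \in \group_{[1]}$. Inductively, having produced $g_i\in\group_{[i]}$, apply Proposition~\ref{prop:transversal} at level $i$: the $i$th row is a transversal of $\group_{[i+1]}$ in $\group_{[i]}$, so there is a unique $\switch(i,j_i)$ with $g_{i+1} := \switch(i,j_i)^{-1}g_i \in \group_{[i+1]}$. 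After $m$ steps,
\[
g_m \ = \ \switch(m-1,j_{m-1})^{-1}\cdots\switch(0,j_0)^{-1}g \ \in \ \group_{[m]} \enspace ,
\]
and inverting gives that $\switch(0,j_0)\cdots\switch(m-1,j_{m-1})g^{-1} = g_m^{-1}\cdot(\text{conjugate})$; more cleanly, one rearranges to see $\switch(0,j_0)\cdots\switch(m-1,j_{m-1})\in g\group_{[m]}$, which is exactly the asserted membership since $\group_{[m]}$ is normal. For uniqueness, suppose two such sequences exist; reducing modulo $\group_{[1]}$ forces the two $0$th-row switches to represent the same coset, hence to be equal by Proposition~\ref{prop:transversal}; cancelling it and repeating the argument on the switch table for $\group_{[1]}$ obtained by deleting the $0$th row and column (as in the proof of Proposition~\ref{prop:transversal}) forces equality of the remaining switches one level at a time.

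The main obstacle is bookkeeping the order of multiplication: the switches are applied left-to-right, so at level $i$ the element being decomposed is $g_i = \switch(i-1,j_{i-1})^{-1}\cdots\switch(0,j_0)^{-1}g$, and one must check it genuinely lies in $\group_{[i]}$ so that Proposition~\ref{prop:transversal} is applicable. This is where the normality of $\group_{[m]}$ and the fact that $\switch(i,j)\in\group_{[i]}$ (noted right after Definition~\ref{def:switch}) are used: each switch from a later row fixes all of $[i]$, so left-multiplying by its inverse cannot disturb membership in $\group_{[i]}$. Once this is tracked carefully the rest is a routine induction on $m$, with the base case $m=0$ vacuous (the empty product equals $g g^{-1} \in \group_{[0]} = \group$).
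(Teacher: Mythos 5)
Your proposal is correct and takes essentially the same route as the paper: iterate Proposition~\ref{prop:transversal} along the stabilizer chain $\group_{[0]}\geq\group_{[1]}\geq\cdots\geq\group_{[m]}$, at each level peeling off the unique transversal representative from the corresponding row of the switch table, with normality of the kernel $\group_{[m]}$ used to pass from $\switch(m-1,j_{m-1})^{-1}\cdots\switch(0,j_0)^{-1}g\in\group_{[m]}$ to the stated membership, and uniqueness obtained by cancelling one level at a time. Only your closing remark is slightly misplaced: the membership $g_i\in\group_{[i]}$ is guaranteed directly by the transversal choice made at step $i-1$ (as your own inductive construction in the first paragraph already shows), not by the fact that later-row switches fix $[i]$, so no extra bookkeeping argument is needed there.
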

\begin{proof}
  From Proposition~\ref{prop:transversal} we know that there is a unique switch $\switch(0,j_0)$ with $\switch(0,j_0)^{-1}g\in\group_{[0]}$.
  Inductively, we obtain $\switch(i+1,j_{i+1})$ by requiring \[ \switch(i+1,j_{i+1})^{-1}\switch(i,j_i)^{-1}\cdots\switch(0,j_0)^{-1}g\in\group_{[i+1]} \enspace.\]
  The uniqueness follows as we have $\switch(i,j)=\id$ whenever $j\leq i$.
\end{proof}

Let $\sigma := \sigma_0\sigma_1\cdots\sigma_{m-1}$ be the product of the $\sigma_i$.

\begin{corollary}
  We have $\sigma\leq m!$, and equality holds if and only if the quotient $\group/\group_{[m]}$ is the symmetric group $\Sym(m)$.
  Moreover, $\sigma$ agrees with the order of $\group$ if the action of $\group$ on $[m]$ is faithful.
\end{corollary}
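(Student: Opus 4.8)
\emph{Proof proposal.}

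The plan is to recognize the product $\sigma$ as the index $[\group:\group_{[m]}]$ and then to identify that index with the order of a subgroup of the symmetric group $\Sym(m)$. First I would invoke Proposition~\ref{prop:transversal}, which asserts $\sigma_i = [\group_{[i]}:\group_{[i+1]}]$ for every $i$. Since the stabilizers form a descending chain
\[
\group \ = \ \group_{[0]} \ \geq \ \group_{[1]} \ \geq \ \cdots \ \geq \ \group_{[m]} \enspace ,
\]
multiplicativity of the index in a tower of subgroups gives
\[
\sigma \ = \ \prod_{i=0}^{m-1}\sigma_i \ = \ \prod_{i=0}^{m-1}[\group_{[i]}:\group_{[i+1]}] \ = \ [\group:\group_{[m]}] \enspace .
\]
All the quantities involved are finite because $\group$ is, and each $\sigma_i\geq 1$ by construction, so the telescoping is valid verbatim.

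Next I would use that $\group_{[m]}$ is the kernel of the action of $\group$ on $[m]$, hence a normal subgroup, so that $\group/\group_{[m]}$ is a well-defined group of order $\sigma$, and the induced action of $\group/\group_{[m]}$ on $[m]$ is faithful. Equivalently, by the first isomorphism theorem applied to the permutation representation $\group\to\Sym(m)$, the quotient $\group/\group_{[m]}$ is isomorphic to a subgroup of $\Sym(m)$. From this the three assertions follow at once: first, $\sigma = |\group/\group_{[m]}| \leq |\Sym(m)| = m!$; second, $\sigma = m!$ forces this embedding to be onto, i.e.\ $\group/\group_{[m]} = \Sym(m)$, and conversely $\group/\group_{[m]} = \Sym(m)$ gives $\sigma = m!$; third, if the action of $\group$ on $[m]$ is faithful then $\group_{[m]}$ is trivial, whence $\sigma = |\group/\group_{[m]}| = |\group|$.

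There is no genuine obstacle here: the statement is a routine consequence of Proposition~\ref{prop:transversal} once one observes the telescoping of indices. The only points requiring minor care are that the chain of stabilizers is indeed a chain of subgroups (immediate from the definition of $\group_{[i]}$) and that $\group/\group_{[m]}$ is a group (using normality of the kernel, already noted in the text); both are handled by the finiteness of $\group$ together with the standard tower law and first isomorphism theorem.
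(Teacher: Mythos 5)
Your proposal is correct and follows essentially the same route as the paper: identify $\sigma$ with the index $[\group:\group_{[m]}]$ and embed $\group/\group_{[m]}$ into $\Sym(m)$ via the permutation representation. The only cosmetic differences are that you obtain $\sigma=[\group:\group_{[m]}]$ by telescoping the indices from Proposition~\ref{prop:transversal} rather than citing Corollary~\ref{cor:uniqueness}, and you deduce the bound $\sigma\leq m!$ from the embedding instead of from $\sigma_i\leq m-i$; both variants are equally valid.
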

\begin{proof}
  We have $\sigma_i\leq m-i$, and this shows the upper bound.
  By Corollary~\ref{cor:uniqueness} the number $\sigma$ is the index of the kernel $\group_{[m]}$ in $\group$.
  The quotient $\group/\group_{[m]}$ is isomorphic to a subgroup of $\Sym(m)$, and this characterizes the equality case.
\end{proof}

A switch table is somewhat reminiscent of a system of strong generators for $\group$; cf.\ \cite[\S4]{Seress:2003}.
These are used, e.g., in \permlib \cite{permlib}.
However, the concepts differ as can be seen from the example below.

\begin{example}\label{exmp:switch}
  Let $\group=\Sym(4)$ be the symmetric group of degree four acting naturally on the set $\{0,1,2,3\}$.
  Using cycle notation a switch table $\switch$ is given by
  \[
    \begin{array}{lll}
      \switch(0,1) = (0\ 3\ 2\ 1) \,, & \switch(0,2) = (0\ 2)(1\ 3) \,, & \switch(0,3)=(0\ 1\ 2\ 3) \,,\\
      \switch(1,2) = (1\ 3\ 2) \,, & \switch(1,3) = (1\ 2\ 3) \,, & \switch(2,3)=(2\ 3) \,,
    \end{array}
  \]
  and all other entries of the switch table are the identity element. The depth of this switch table is $3$.
  We have $\sigma_0=4$, $\sigma_1=3$, $\sigma_2=2$ and $\sigma=24$, which is the order of $\Sym(4)$.

  Let $\group_i^{\switch}$ be the subgroup of $\group$, called the \emph{$i$th switch group}, which is generated by the $i$th row of the switch table $\switch$.
  The switch group $\group_0^{\switch}$ is generated by the $4$-cycle $(0\ 1\ 2\ 3)$, while $\group_1^{\switch}$ is generated by the $3$-cycle $(1\ 2\ 3)$, and $\group_2^{\switch}$ is generated by the transposition $(2\ 3)$.
  Consequently, $\group_0^{\switch}$ is a proper subgroup of $\group_{[0]}=\group$.
  Similarly, $\group_1^{\switch}$ is a proper subgroup of $\group_{[1]}$, which is the symmetric group of degree three acting on $\{1,2,3\}$,  whereas $\group_2^{\switch}$ agrees with $\group_{[2]}$.
  Observe that $\group_2^{\switch}$ is not a subgroup of~$\group_1^{\switch}$, which in turn is not a subgroup of~$\group_0^{\switch}$.
\end{example}

Now we consider a second action of $\group$ on some other set $\Omega$ and a map $\eval:\Omega\to\RR^m$.
Our first action of $\group$ on $[m]$ induces a (linear) action on the vector space $\RR^m$ by permuting the coordinates.
If the compatibility condition 
\[
\eval(g \cdot \omega) \ = \ g \cdot \eval(\omega)
\]
is met we call $\eval$ an \emph{$m$-evaluation map} of the $\group$-action on the set $\Omega$.
Due to the compatibility we obtain an action of $\group$ on the set $\eval(\Omega)\subset\RR^m$.
Now we define the \emph{canonical representative} $\canonical(\eval(\omega))$ for that action as the lexicographically maximal vector in the orbit $\group \cdot \eval(\omega)$.
Below we will discuss the relationship of this definition with the canonical representative of a triangulation from~\eqref{eq:can}.
The following example is natural.
\begin{example}\label{ex:characteristic}
  If $\group$ acts on $[m]$ then this induces a second action on the set $\Omega$ of all subsets of $[m]$.
  Sending $\omega\in\Omega$ to its characteristic vector of length $m$ yields an evaluation map, say~$\chi$.
  The canonical representative of an orbit is the lexicographically smallest set in that orbit.

  This constitutes the second part of our approach for $\mptopcom$. The
  two-step approach can be summarized as choosing $\eval$ to be
  $\eval:=(\gkz,\chi)$.
  The canonical representative then has lexicographically largest
  $\gkz$-vector, and is the largest element among those with the same
  $\gkz$-vector. If one is looking at regular triangulations exclusively, one
  may as well forget about the second part and choose $\eval:=\gkz$.
\end{example}

Our main method for computing canonical representatives is the procedure \textproc{canonical} in
Algorithm~\ref{alg:canonical-rep}.  It essentially relies on the function \textproc{GoodSwitches} in
Algorithm~\ref{alg:good_switches} which determines all switches that may lexicographically improve a
given vector $z\in\RR^m$ in the orbit of $\group_{[i]}$. The idea is to employ a depth first search.

\begin{algorithm}[thb]
\begin{algorithmic}[1]
  \Procedure{GoodSwitches}{$z,i,\switch$}
   \State $Y = (y_0,y_1,\dots,y_{\ell-1}) \gets $ sort descending $\SetOf{z_j}{z_j>z_i}$
   \For {$k=0,1,\dots,\ell-1$}
      \State $J \gets \SetOf{j\in[m]}{z_j=y_k}$
      \State $S \gets \SetOf{\switch(i,j)}{j\in J,\ \switch(i,j)\neq\id }$
        \If { $S \not= \emptyset$ }
            \State \Return $S$ \label{step:first-kind}
        \EndIf
   \EndFor
   \State $J \gets \SetOf{j\in[m]}{z_j=z_i}$
   \State \Return $\SetOf{\switch(i,j)}{j\in J}$
\EndProcedure
\caption{Find switches which may yield a larger vector in $\group_{[i]}\cdot z$}
\label{alg:good_switches}
\end{algorithmic}
\end{algorithm}

\begin{algorithm}[bht]
\begin{algorithmic}[1]
  \Procedure{canonical}{$\omega, \eval, i, \switch$}
  \If{$i>\switchTableSize(\switch)$}
      \State\Return $\omega$
  \EndIf
  \State $S \gets$\Call{GoodSwitches}{$\eval(\omega),i, \switch$}
  \State $\omega' \gets \omega$
  \For { $s \in S$ }
    \State $\omega'' \gets$ \Call{canonical}{$s \cdot \omega, \eval, i+1, \switch$}
    \If {$\eval(\omega'')>\eval(\omega')$}
      \State $\omega' \gets \omega''$
    \EndIf
  \EndFor
  \State \Return $\omega'$
\EndProcedure
\caption{Canonical representative of an orbit}
\label{alg:canonical-rep}
\end{algorithmic}
\end{algorithm}

\begin{proposition}\label{prop:can}
  Let $\omega\in\Omega$ be an arbitrary element.
  For all $i\in[m]$, Algorithm \ref{alg:canonical-rep} computes an element $\omega'\in\group_{[i]}\cdot\omega$ such that $\eval(\omega')$ is lexicographically maximal among all elements in the orbit $\group_{[i]}\cdot\omega$.
  In particular, for $i=0$, the evaluation $\eval(\omega')$ is the canonical representative $\canonical(\eval(\omega))$.
\end{proposition}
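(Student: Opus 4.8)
The plan is to prove the displayed statement by backward induction on $i$, which is legitimate because \textproc{canonical} called at level $i$ recurses only into level $i+1$ and the recursion terminates once $i>\switchTableSize(\switch)$; so I would first settle all $i>\switchTableSize(\switch)$ and then descend one level at a time. For such $i$ the algorithm returns $\omega$ unchanged, so I must check that $\eval(\omega)$ is already lexicographically maximal in $\group_{[i]}\cdot\omega$. The key point is that $\group_{[i]}$ then coincides with the kernel $\group_{[m]}$ of the action on $[m]$: by Proposition~\ref{prop:transversal} the index of $\group_{[k+1]}$ in $\group_{[k]}$ equals $\sigma_k=1$ whenever the $k$th row of $\switch$ is all-identity, i.e.\ for $k\ge\switchTableSize(\switch)$, so $\group_{[\switchTableSize(\switch)]}=\dots=\group_{[m]}$. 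As $\group_{[m]}$ fixes $[m]$ pointwise it permutes the coordinates of $\RR^m$ trivially, hence $\eval(g\cdot\omega)=g\cdot\eval(\omega)=\eval(\omega)$ for $g\in\group_{[m]}$ by the compatibility of $\eval$; thus $\eval$ is constant on $\group_{[i]}\cdot\omega$ and the base case is immediate.

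For the inductive step I would fix $i\le\switchTableSize(\switch)$, assume the claim for $i+1$, and set $z:=\eval(\omega)$. First I would record what \textproc{GoodSwitches} computes. The orbit of $i$ under $\group_{[i]}$ is exactly $\{i\}\cup\SetOf{j}{\switch(i,j)\neq\id}$: any $g\in\group_{[i]}$ with $g(j)=i$ and $j\neq i$ is a valid choice in Definition~\ref{def:switch} and therefore forces $\switch(i,j)\neq\id$, while conversely $\switch(i,j)\in\group_{[i]}$ sends $j\mapsto i$. Hence the largest value the $i$th coordinate of $g\cdot z$ can attain over $g\in\group_{[i]}$ is $\beta:=\max\bigl(\{z_i\}\cup\SetOf{z_j}{\switch(i,j)\neq\id}\bigr)$, and inspecting Algorithm~\ref{alg:good_switches} shows that the returned set $S$ equals $\SetOf{\switch(i,j)}{z_j=\beta,\ \switch(i,j)\neq\id}$, augmented by $\id$ precisely when $\beta=z_i$; in particular $S\neq\emptyset$. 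Next I would establish the coset identity
\[
  \SetOf{\tau\in\group_{[i]}\cdot\omega}{\eval(\tau)_i=\beta}\ =\ \bigcup_{s\in S}\group_{[i+1]}\cdot(s\cdot\omega)\enspace ,
\]
using that every $s\in S$ lies in $\group_{[i]}$ and that, for $g\in\group_{[i]}$ and the $s\in S$ with $s^{-1}(i)=g^{-1}(i)$, the product $gs^{-1}$ fixes $0,\dots,i$ and hence lies in $\group_{[i+1]}$.

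With these two facts the statement follows quickly. Every element of $\group_{[i]}\cdot\omega$ has the same first $i$ coordinates in its evaluation (because $\group_{[i]}$ fixes $0,\dots,i-1$), so a lexicographically maximal evaluation over $\group_{[i]}\cdot\omega$ must attain $\beta$ in coordinate $i$; therefore the lexicographic maximum over $\group_{[i]}\cdot\omega$ agrees with that over $\bigcup_{s\in S}\group_{[i+1]}\cdot(s\cdot\omega)$. By the inductive hypothesis the call \textproc{canonical}$(s\cdot\omega,\eval,i+1,\switch)$ returns an element with lexicographically maximal evaluation in $\group_{[i+1]}\cdot(s\cdot\omega)$, and the for-loop of Algorithm~\ref{alg:canonical-rep} keeps the best of these over $s\in S$ --- the initial value $\omega$ causing no harm, since $S\neq\emptyset$ and, when $\beta=z_i$, already $s=\id\in S$ supplies a candidate at least as large as $\omega$. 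Thus the returned $\omega'$ has lexicographically maximal evaluation in $\group_{[i]}\cdot\omega$, and it lies in this orbit because $\group_{[i+1]}\cdot(s\cdot\omega)\subseteq\group_{[i]}\cdot\omega$ for $s\in\group_{[i]}$. Taking $i=0$, where $\group_{[0]}=\group$, identifies $\eval(\omega')$ with $\canonical(\eval(\omega))$.

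I expect the coset identity in the inductive step to be the main obstacle: one has to track carefully that the switch-table entries are genuine group elements fixing the prescribed prefix and moving the prescribed index, that quotienting out such an $s$ really lands in $\group_{[i+1]}$ rather than merely in $\group_{[i]}$, and that \textproc{GoodSwitches} returns exactly one switch per relevant orbit point and nothing spurious --- in particular including $\id$ in the tie case $\beta=z_i$ so that the union above actually covers $\omega$ itself. By comparison the base case and the lexicographic bookkeeping in the for-loop are routine.
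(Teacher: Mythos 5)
Your proof is correct and follows essentially the same route as the paper: the paper's (much terser) argument invokes Corollary~\ref{cor:uniqueness}/Proposition~\ref{prop:transversal} — the rows of the switch table are transversals of the stabilizer chain $\group_{[0]}\supseteq\group_{[1]}\supseteq\cdots\supseteq\group_{[m]}$ — and notes the tie case where the identity must be kept, which is exactly what your backward induction with the coset identity and the analysis of \textproc{GoodSwitches} (including $\id\in S$ when $\beta=z_i$) makes precise. In effect you have written out in full the inductive argument the paper only sketches.
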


\begin{proof}
  The correctness of Algorithm \ref{alg:canonical-rep} essentially follows from Corollary~\ref{cor:uniqueness}, which provides a transversal of $\group_{[m]}$ as a subgroup of $\group$ in terms of switches.
  As that transversal arises from an inductive construction Algorithm \ref{alg:canonical-rep} works recursively.

  It remains to discuss the case where $\eval(\omega')_0=\eval(\omega)_0$.
  Then we need to consider all possible switches which may or may not change the entry with index $0$ but keep the $\eval$-value.
  This may include the identity.
\end{proof}

\begin{example}\label{exmp:can}
  Again we consider the symmetric group of degree four, $\group$, acting on the set $\{0,1,2,3\}$ and the $4$-switch table $\switch$ from Example~\ref{exmp:switch}.
  As a second action of $\group$ we take the induced action on the powerset $\Omega=2^{\{0,1,2,3\}}$.
  Then the map $\eval$ which sends $\omega\in\Omega$ to its characteristic vector is a $4$-evaluation; see Example~\ref{ex:characteristic}.
  We want to compute $\canonical(\{1,2\},0)$ via Algorithm~\ref{alg:canonical-rep}, i.e., we are taking the two-element set $\omega=\{1,2\}$ as our input.
  Hence we have
  \[
  \eval(\omega) \ = \ (0,1,1,0) \enspace.
  \]
  The set $S$ originates from Step~\ref{step:first-kind} in Algorithm~\ref{alg:good_switches}, and it comprises the two permutations $(0\ 3\ 2\ 1)$ and $(0\ 2)(1\ 3)$.
  % We call a set of switches which comes from Step~\ref{step:first-kind} of the \emph{first kind}.\paragraph{Acknowledgment.}
  % The \emph{second kind} of switches are the ones produced in Step~12 of Algorithm~\ref{alg:good_switches}.

  In the first round of the for-loop we have $\switch(0,1)=(0\ 3\ 2\ 1)$, and thus $\switch(0,1)\cdot\omega=\{0,1\}$.
  Recursively we compute $\canonical(\{0,1\},1)=\{0,1\}$.
  For the second switch $\switch(0,2)=(0\ 2)(1\ 3)$ with $\switch(0,2)\cdot\omega=\{0,3\}$ we get $\canonical(\{0,3\},1)=\{0,1\}$.
  Hence, both branches return $\{0,1\}$ and the algorithm produces $\{0,1\}$ as its output.
  % The entire recursion tree is shown in Figure~\ref{fig:can}.
\end{example}

For $\omega\in\Omega$ denote by $\jbound(\omega)$ the maximum number of identical entries of the vector $\eval(\omega)$.
Since $\group$ acts on the set $\eval(\Omega)$ by coordinate permutations, the number $\jbound(\omega)$ is an invariant of the orbit $\group\cdot\omega$.
We let $\jbound$ be the maximal $\jbound(\omega)$, taken over all elements in $\Omega$.

Define $\phi_i$ as the minimum of $\jbound$ and $\sigma_i$; this depends both on the switch table and on the evaluation function.
Furthermore, we set $\phi:=\phi_0\phi_1\cdots\phi_{\switchTableSize(\switch)-1}$.
By construction we have $\phi\leq\sigma\leq\#\group$ and $\phi\leq m!$.
The benefit from our somewhat elaborate setup comes from the fact that the number $\phi$ may be much smaller than the order of the group $\group$; see Example~\ref{exmp:I4} below.

\begin{corollary}\label{cor:complexity}
  The worst-case time complexity of Algorithm~\ref{alg:canonical-rep} is of order $O(\max\{\phi\cdot m, m\log m\})$ and the worst-case space complexity is of order $O(m^3)$.
\end{corollary}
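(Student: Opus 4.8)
The plan is to analyze the recursion tree of Algorithm~\ref{alg:canonical-rep} level by level, bounding separately the branching factor at depth $i$, the cost of each call (dominated by \textproc{GoodSwitches} and by evaluating $\eval$), and the memory footprint of the recursion.

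First I would bound the number of recursive calls. At recursion level $i$ the set $S$ returned by \textproc{GoodSwitches} is a subset of the $i$th row of the switch table, so $\#S\le\sigma_i$. On the other hand, by the construction in Algorithm~\ref{alg:good_switches}, every switch $\switch(i,j)\in S$ satisfies $z_j=y_k$ for a single value $y_k$ (the first value for which $S\neq\emptyset$), or $z_j=z_i$ in the fallback case; in either case all the indices $j$ with $\switch(i,j)\in S$ have the same $\eval$-entry, hence $\#S\le\jbound(\omega)\le\jbound$. Combining the two bounds gives $\#S\le\phi_i$, so the recursion tree has at most $\phi_0\phi_1\cdots\phi_{\switchTableSize(\switch)-1}=\phi$ leaves and at most $O(\phi)$ nodes in total (since past depth $\switchTableSize(\switch)$ the procedure returns immediately, by the $i>\switchTableSize(\switch)$ test). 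Next I would bound the work done in a single node. Inside \textproc{GoodSwitches} one sorts the entries of $\eval(\omega)$ that exceed $z_i$, costing $O(m\log m)$, and then scans rows of the switch table and forms the set $S$, which is $O(m)$; applying a switch $s\cdot\omega$ and recomputing $\eval$ is $O(m)$ under the unit-cost model for point indices. So each node costs $O(m\log m)$. Multiplying node count by per-node cost yields $O(\phi\cdot m\log m)$, but one can be slightly more careful: the $O(m\log m)$ sorting cost is incurred once per node, while the $O(\phi)$-sized branching only multiplies the $O(m)$ portion; writing the total as $O(\max\{\phi\cdot m,\ m\log m\})$ captures that when $\phi$ is small the single sort dominates, and when $\phi$ is large the branching dominates. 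I would make this precise by noting the recursion tree has depth at most $\switchTableSize(\switch)\le m$ and at most $\phi$ root-to-leaf paths, so the number of nodes is $O(\phi\cdot m)$ in the worst case but the number of distinct \emph{sorting operations} along any path telescopes appropriately; the cleanest formulation is the stated maximum.

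For the space complexity, the dominant cost is storing the switch table itself, which is an $m\times m$ array of group elements, each group element being a permutation of $[m]$ represented in $O(m)$ space; this gives $O(m^3)$. The recursion stack has depth $\le\switchTableSize(\switch)\le m$ and each frame stores $O(m)$-sized objects ($\omega$, $\omega'$, $S$, the sorted list $Y$), contributing only $O(m^2)$, which is absorbed. Hence the space bound is $O(m^3)$.

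The main obstacle I anticipate is not any single estimate but the bookkeeping that makes the branching bound $\#S\le\phi_i$ airtight: one must check that in \emph{both} exit points of Algorithm~\ref{alg:good_switches} (the early return at line~\ref{step:first-kind} and the final fallback return) the returned switches correspond to indices sharing a common $\eval$-value, so that $\#S$ is bounded by the multiplicity $\jbound(\omega)$ and not merely by $m$; and simultaneously that $\#S\le\sigma_i$ because $S$ is a subset of the nontrivial entries of the $i$th row together with possibly the identity. Reconciling the "possibly the identity" case (the fallback branch includes $\switch(i,j)$ for $j$ with $z_j=z_i$, which may be $\id$) with the definition $\sigma_i=1+\#\{\switch(i,j)\neq\id\}$ is exactly why the "$+1$" appears in $\sigma_i$, and this is the detail I would be most careful to state correctly.
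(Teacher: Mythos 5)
Your ingredients are exactly the paper's: you bound the branching at level $i$ by $\phi_i=\min\{\jbound,\sigma_i\}$ for the same two reasons (the returned set $S$ sits inside the $i$th row of the switch table, and at both exit points of Algorithm~\ref{alg:good_switches} all returned indices share a single $\eval$-value), you bound the number of leaves of the recursion tree of Algorithm~\ref{alg:canonical-rep} by $\phi_0\phi_1\cdots=\phi$, and your space argument (at most $m^2$ nontrivial switch-table entries, each a permutation stored in $O(m)$, plus a recursion stack of depth at most $\switchTableSize(\switch)\le m$ with $O(m)$-sized frames) is the paper's argument almost verbatim. So the route is the same; the issue is that your assembly of the time bound does not actually reach the stated $O(\max\{\phi\cdot m,\, m\log m\})$.

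Concretely: you first assert $O(\phi)$ nodes, later $O(\phi\cdot m)$ nodes in the worst case, and your per-node cost is $O(m\log m)$ because you charge a sort to every call of \textproc{GoodSwitches}; multiplying gives $O(\phi\, m\log m)$ or $O(\phi\, m^2\log m)$, both larger than the claimed bound, and the remark that the sorting cost ``telescopes appropriately'' along root-to-leaf paths is not an argument --- each call sorts its own value set, and nothing cancels. The paper closes this step by different bookkeeping: it takes the number of nodes and edges of the recursion tree to be $O(\phi)$, charges each edge one lexicographic comparison of two length-$m$ vectors (hence $O(\phi\cdot m)$ in total), keeps a single $O(m\log m)$ term for the sort, and declares all remaining costs dominated by the comparisons. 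To make your version airtight you would either adopt that edge-charging, or argue explicitly why the per-call sorting does not attach a $\log m$ factor to the $\phi\cdot m$ term (e.g.\ by amortization, or by noting the sort involves only the distinct values exceeding $z_i$); as written, your last step asserts the stated maximum rather than deriving it. Note also that the two node counts you give cannot both stand: the unary chains created by the fallback branch of Algorithm~\ref{alg:good_switches} (which may return a single identity switch) are precisely what makes the smaller count $O(\phi)$ delicate, so this is the one point that needs an explicit justification rather than a gesture.
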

\begin{proof}
   \textit{Time complexity:}
  In the procedure \textproc{GoodSwitches}, called with the vector $z$ of length $m$ and the index $i$, the set $E$ constructed in Step~2 has size at most $m-1$.
  Hence sorting $E$ is of order $O(m\log m)$.
  The size of set $J$ constructed in Step~5 or Step~11 is bounded by the number of identical entries $\jbound(\omega)$ in the vector $\eval(\omega)$. Thus, the set $S$ constructed in Step~6 or Step~12 is bounded by the size of $J$ as well. But the set $S$ also contains only non-trivial entries from the $i$th row of the switch table, in case of Step~12 a single copy of the identity. Hence
  the set of switches returned is of size at most $\phi_i$.
  
  The height of the recursion tree of \textproc{canonical} is at most $m-1$, and hence the number of leaves is bounded by $\phi_0\phi_1\cdots\phi_{m-1}=\phi$.
  Consequently, the total number of nodes and also the total number of edges is of order $O(\phi)$.
  For each edge we lexicographically compare two vectors of length $m$.
  All other costs are dominated by the total complexity of these comparisons.
   
  \textit{Space complexity:} The number of entries in the switch table is given by the expression
  \begin{equation}\label{eq:complexity:space}
    \sum_{i=0}^{\switchTableSize(\switch)-1}\phi_i \ \leq \ \sum_{i=0}^{\switchTableSize(\switch)-1}m-i \ \leq \ m^2 \enspace .
  \end{equation}
  Algorithm~\ref{alg:good_switches} returns at most one row of the switch table, and the recursion depth is bounded by the depth of the switch table.
  Each element of the switch table is a permutation of length $m$.
  Thus the total space requirement amounts to $O(m^3)$.
\end{proof}
Note that $O(m^3)$ is a coarse estimate.
In practice the depth of the switch table is often small, and then \eqref{eq:complexity:space} provides better bounds.

\medskip

Finally, we can explain our choice for the canonical representative of a triangulation as in \eqref{eq:can}, which rests on Definition~\ref{def:total}.
Let us recall our setup.
The point set $P\subset\RR^d$, of cardinality $n$, is affinely spanning.
It is equipped with a group $\group$ of affine unimodular automorphisms.
This action induces an action on the set $\Omega$ of all triangulations of $P$.
We can encode a triangulation as its set of $d$-simplices, which are the maximal cells.
The latter are encoded as those $(d{+}1)$-subsets of $P$ which are affinely spanning.
If $m$ is the number of all $d$-simplices then encoding a triangulation $\Delta$ as its characteristic vector among the set of all $d$-simplices yields an $m$-evaluation map of the action of $\group$ on $\Omega$; see Example~\ref{ex:characteristic}.
For computing the canonical representative of a triangulation $\Delta\in\Omega$ in a brute-force approach all elements of $\group$ are applied and the lexicographically largest one is picked.
Of course, the elements of $\group$ can be precomputed once in the initialization.
Since most orbits are expected to be about the size of $\group$ this is often superior to the more traditional approach of trying generators of $\group$ until no new triangulations are found, since it requires fewer comparisons of (characteristic vectors of) triangulations.

Yet evaluating at GKZ-vectors leads to a significant improvement.
The map which sends a triangulation $\Delta$ to its GKZ-vector is an $n$-evaluation, and $n$ is always much smaller than $m$.
More importantly most entries of a typical GKZ-vector are distinct and hence the parameter $\jbound$ which enters the complexity analysis in Corollary~\ref{cor:complexity} is very small.
This explains criterion (i) in Definition~\ref{def:total}.
The tie-breaker criterion (ii) is only necessary for dealing with nonregular triangulations.
The following example occurs in a computation which should be considered small by current standards.
Larger input results in larger gains. 
\begin{example}\label{exmp:I4}
  Let us again look at the set $P=I^4$ of vertices of the $4$-dimensional regular cube.
  The group $\group$ is the full group of affine (unimodular) automorphisms of order 384.
  Mapping to GKZ-vectors yields a $16$-evaluation.
  Any switch table has depth four with
  \[
    \sigma_0 = 16 \,,\ \sigma_1 = 4 \,,\ \sigma_2 = 3 \,,\ \sigma_3 = 2 \enspace ,
  \]
  Their product $\sigma=\sigma_0\sigma_1\sigma_2\sigma_3$ equals $384$, which is the order of the group $\group$.

  To assess the complexity of computing the canonical representatives, the shape of the GKZ-vectors is the key.
  Here is the full distribution of values of $\jbound(\cdot)$ for the 247\,451 orbits of sub-regular triangulations of $I^4$.
% {(2 38673) (3 134773) (4 58835) (5 11699) (6 2985) (7 364) (8 107) (9 11) (10 2) (12 2)}
  \begin{center}
    \begin{tabular}{cccccccccccc}
      \toprule
      1 & 2     & 3      & 4     & 5     & 6    & 7   & 8   & 9  & 10 & 11 & 12 \\
      \midrule
        & 38\,673 & 134\,773 & 58\,835 & 11\,699 & 2\,985 & 364 & 107 & 11 & 2  &    & 2 \\
      \bottomrule
    \end{tabular}
  \end{center}
  The average value of $\jbound$ (taken over all orbits) is approximately $3.22$.
  This results in an average complexity parameter of $\phi \approx 66.8$, which is much smaller than $384$, the order of the group.
\end{example}

\section{Implementation details}
\label{sec:implementation}
\noindent
Our implementation \mptopcom builds on and uses existing code from \mts \cite{mts_tutorial}, \polymake \cite{polymake} and \topcom \cite{TOPCOM}.
The \mts setup dedicates one process to the master and a second one for dealing with the output; the remaining processes are reserved for the workers.

Let us give a brief overview of the interplay of the different software
systems: While parallelization is handled by \mts, the single worker processes
are in the domain of \polymake and \topcom. Triangulations and groups are
handled by \topcom's highly optimized code. Since \topcom provides vectors and
matrices already, a first implementation used these for $\gkz$-computation.
However, replacing \topcom's vectors and matrices by the corresponding objects
from \polymake vastly improved performance. At this point, all vectors,
matrices, and sets are handled by \polymake. Checking regularity is handled by
\topcom's internal interface to \cddlib \cite{cddlib}.

Parallelizing with more threads will scale almost linearly at first, but depending on
the size of the example, the curve of time consumption over number of threads
will flatten sooner or later, as can be seen in the Figures
\ref{fig:cube-timings-ryzen}, \ref{fig:cube-timings-cluster}, and
\ref{fig:s3s7-timings-cluster}. In order to determine whether a down-flip is
valid, the reverse search algorithm descends to the target node and checks whether
the predecessor of the target is the original node. This happens essentially
for every down-flip into a triangulation. Thus, for a given triangulation the
predecessor is computed multiple times, and hence, its neighbors are computed
multiple times. If one considers regular triangulations exclusively, then
regularity is checked multiple times as well. In the symmetric case, the
canonical representative is computed several times. We attack this problem by
maintaining three caches.  The key type in every cache is a triangulation
$\Delta$, so we just list the values.
\begin{enumerate}
\item \emph{Flip cache:} Contains the list of all flips of $\Delta$.
\item \emph{Orbit cache:} The canonical representative from the orbit of $\Delta$.
\item \emph{Regularity cache:} A boolean whether $\Delta$ is regular or not.
\end{enumerate}
Each worker is equipped with three such caches; they follow the
least-recently-used paradigm and can store a fixed number of keys, subject to change by the
user.
Caching is combined with hashing such that previously computed data is instantly available, if it is still cached.
There are a number of further caches with dynamic size, e.g. a cache containing
the volumes of all maximal simplices. These caches are filled at startup before
the reverse search begins and never changed again.
Sharing these caches among the workers as hinted at before
is possible, but not realized yet, and would probably damage the flexibility of
\mptopcom.

Parallelizing does not change the amount of down-flips going into a
triangulation. But since the caches are not shared among the workers, every
worker populates its own caches, leading to the non-linearity in scaling. In
theory, working without caches would scale linearly, but is infeasible for
larger examples, even starting with $I^4$.

We use several encodings of the same triangulation simultaneously.
For instance, enumerating all maximal simplices spanned by our point configuration first allows to store a triangulation as a set of machine-size integers.
Those integers are the indices pointing into the array of maximal simplices.

The implementation is flexible enough to deal with several scenarios.
In our fastest setup we assume that the coordinates of the point configuration are machine size integers.
This means that we may use \texttt{int} for the entries of the GKZ-vectors.
The condition on the integrality is natural for the applications for algebraic and tropical geometry we have in mind.

As pointed out several times the total ordering on the triangulations from Definition~\ref{def:total} depends on an ordering of the points.
In practice it is often beneficial to pick a random ordering.
This may reduce the height of the search tree for the canonical representative.

We experimented with several compilers on various kinds of hardware.
By and large we found \texttt{clang}, version 3.8.0, to be about 10\% faster than various versions of \texttt{gcc} compilers.
Therefore our timings below employ \texttt{clang}.
For the hardware we tried the following:
\begin{itemize}
\item A cluster with four nodes, each of which is equipped with 2 $\times$ Intel Xeon E5-2630 v2 Hexa-Core (2600--3100MHz, 5201.45 bogomips) and 64GB RAM per node.
  On this machine we used 40 threads. 
The operating system is openSUSE 42.2 with kernel version 4.4.79.
\item A desktop machine with an AMD Ryzen 7 1700 CPU.  This has 8 cores/16 threads (3000 MHz, 5967.87 bogomips) and 32GB RAM.
The operating system is openSUSE 42.1 with kernel version 4.12.1.
\end{itemize}
Recall that two processes are reserved for the master and the output processes.
That is, $n$ processes means $n-2$ workers.

\section{Experimental results}
\label{sec:experiments}
\noindent
We tried our implementation on a number of point configurations which occur naturally in geometric combinatorics and related areas.
Our notation is as follows.
The point set $\Delta_d$ comprises the $d+1$ vertices $0,e_1,e_2,\dots,e_d$ of the standard $d$-dimensional simplex, where $e_i$ is the $i$th standard basis vector of $\RR^d$.
We abbreviate $I=[0,1]$, and so $I^d$ is the $d$-dimensional unit cube.
From these several interesting point configurations can be formed, e.g., by taking products.
The point configurations of type $I^d$ or $\Delta_p\times\Delta_q$ or $I^p\times\Delta_q$ are in convex position, i.e., they form the vertices of polytope.
For such point configurations Table~\ref{tab:summary} shows the number of orbits of regular and sub-regular triangulations; in all cases the group $\group$ is the full group of affine symmetries.

To the best of our knowledge the results for $\Delta_2\times\Delta_6$, $\Delta_3\times\Delta_4$ and $3\cdot\Delta_3$ are new.

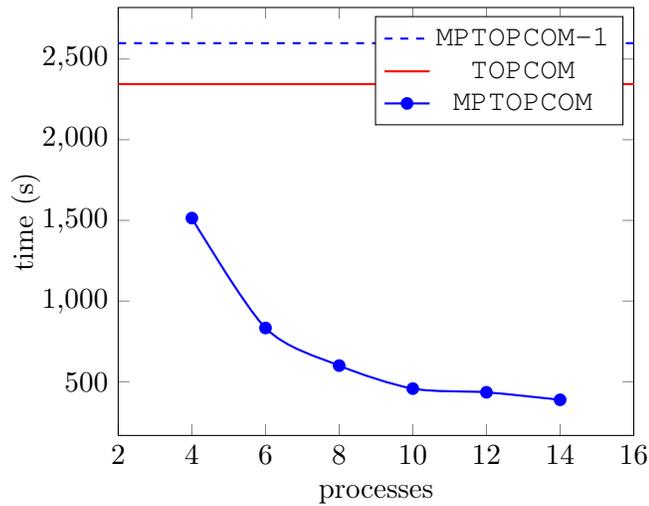
\begin{figure}[htb]
\begin{tikzpicture}
\begin{axis}[
      ylabel=\text{time (s)},
      xlabel=\text{processes},
      xmin = 2, xmax = 16,
   ]
   \addplot[mark=none, blue, thick, dashed, samples = 2, domain=2:100] {2598.29};
   \addlegendentry{\mptopcomone}
   \addplot[mark=none, red, thick, samples = 2, domain=2:100] {2344.61};
   \addlegendentry{\topcom}
   \addplot[smooth, mark=*, blue, thick] plot coordinates {
      (4, 1514.21)
      (6, 833.09)
      (8, 600.08)
      (10, 457.35)
      (12, 434.37)
      (14, 387.97)
   };
   \addlegendentry{\mptopcom}
\end{axis}
\end{tikzpicture}
\caption{Timings for enumerating the sub-regular triangulations of $I^4$ taken on AMD Ryzen 7 1700 with 32GB RAM, depending on the number of processes.
  The timings for the single-threaded version of \mptopcom (marked with ``\texttt{-1}'') and \topcom are added for reference.}
\label{fig:cube-timings-ryzen}
\end{figure}

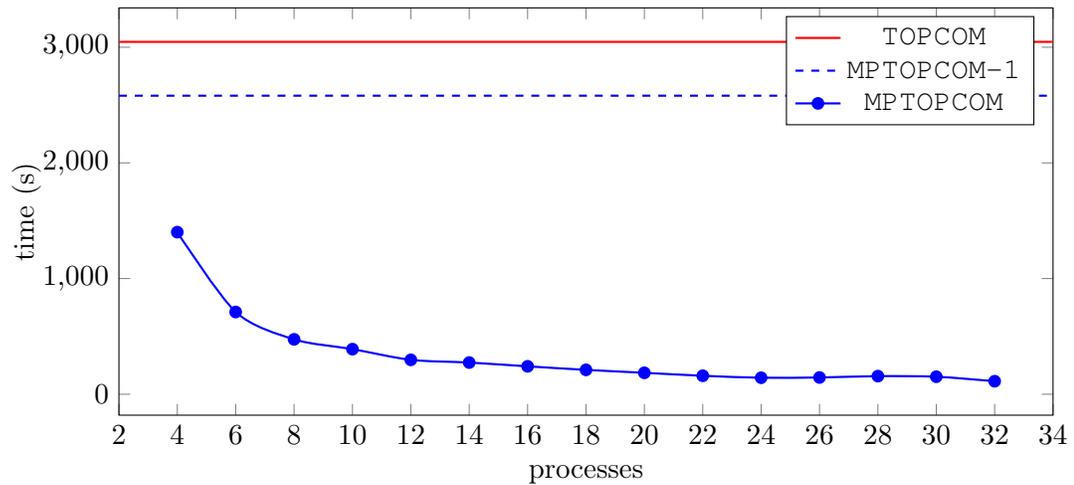
\begin{figure}[htb]
\begin{tikzpicture}
\begin{axis}[
      ylabel=\text{time (s)},
      xlabel=\text{processes},
      xmin = 2, xmax = 34,
      width = 14cm, height = 7cm
   ]
   \addplot[mark=none, red, thick, samples = 2, domain=2:100] {3046.26};
   \addlegendentry{\topcom}
   \addplot[mark=none, blue, thick, dashed, samples = 2, domain=2:100] {2580.70};
   \addlegendentry{\mptopcomone}
   \addplot[smooth, mark=*, blue, thick] plot coordinates {
      (4, 1401.14)
      (6, 710.93)
      (8, 474.37)
      (10, 389.68)
      (12, 297.19)
      (14, 273.82)
      (16, 241.14)
      (18, 210.64)
      (20, 185.16)
      (22, 159.67)
      (24, 142.50)
      (26, 144.82)
      (28, 156.45)
      (30, 151.29)
      (32, 112.38)
   };
   \addlegendentry{\mptopcom}
\end{axis}
\end{tikzpicture}
\caption{Timings for enumerating the sub-regular triangulations of $I^4$ taken on the Intel Xeon E5-2630 v2 cluster with 64GB RAM per node, depending on the number of processes.}
\label{fig:cube-timings-cluster}
\end{figure}

\subsection{The $4$-cube}
\label{subsec:cube}
A standard test case is the four-dimensional cube $I^4$; see also Example~\ref{exmp:I4}.
The group of affine unimodular automorphisms has order 384, and there are 247\,451 orbits of triangulations in the flip-component of the regular triangulations.
By today's standard this is to be considered small input.
Out of these, actually $235\,277$ are regular \cite[Thm.~6.3.12]{Triangulations}.
Pournin proved that the flip graph of $I^4$ is connected \cite{Pournin:2013}.
Since \mptopcom also counts $247\,451$ orbits it follows that each triangulation of $I^4$ is sub-regular.
The total number of triangulations of $I^4$ adds up to 92\,487\,256.

On an AMD Ryzen 7 1700 \topcom takes 2345 seconds to enumerate the regular component, while \mptopcom requires 2598 seconds (single-threaded) and 457 seconds (with 10 processes), respectively.
Figure~\ref{fig:cube-timings-ryzen} shows how \mptopcom scales with the number of processes.
It shows that, on a standard desktop computer, already four processes, i.e., two workers, suffice for \mptopcom to be substantially faster than \topcom.
This should be compared with Figure~\ref{fig:cube-timings-cluster} which shows very similar behavior on our cluster.
The only exception is that, on that hardware, even a single-threaded version of \mptopcom beats \topcom.
The single-threaded \mptopcomone is the pure down-flip reverse search algorithm built without the overhead of \mts and \mpi.

Next we try to give an idea about which percentage of the total running time is spent on which subtask.
Table~\ref{table:callgrind:mptopcom} shows the values for \mptopcomone during the computation of all sub-regular triangulations of $I^4$.
For comparison the relative timings for \topcom are given in Table~\ref{table:callgrind:topcom}.
All these numbers were determined with \valgrind's tool \callgrind~\cite{valgrind}.
In both cases the bulk of the time is spent on finding and processing the flips.
For \mptopcom the major subtask is to determine the canonical representatives, while \topcom will explicitly compute the full orbits of each triangulation that it visits.
For both programs the cost for finding the initial triangulation (e.g., in Algorithm~\ref{alg:down-flip} before Step~\ref{step:reverse}) is negligible.
The picture changes entirely if one restricts the algorithms to enumerate regular triangulations only.
Then $\mptopcom$ spends $95\%$ of its time on solving linear programs, while that mark for $\topcom$ reaches $98\%$.
The remaining time is used in a similar fashion to that seen in
Tables~\ref{table:callgrind:mptopcom} and~\ref{table:callgrind:topcom}.
For larger examples the overall pattern stays the same, but the most costly subtasks tend to take up even higher proportions of the total running times.

\begin{table}[bh]\centering
\caption{Percentages of total running time spent by \mptopcomone for computing all (sub-regular) triangulations of the $4$-cube.  The value for flip processing further refined in second column.}
\label{table:callgrind:mptopcom}
\begin{tabular}{rrl}
\toprule
95\% &      & Process flips\\
& \multicolumn{1}{|r}{68\%} & Compute canonical representative\\
& \multicolumn{1}{|r}{20\%} & Partition into up- and down-flips\\
& \multicolumn{1}{|r}{7\%} & all remaining\\
3\%   &   & Check whether flip is edge of reverse search tree\\
2\%   &   & all remaining\\
\bottomrule
\end{tabular}
\end{table}

\begin{table}[bh]\centering
\caption{Percentages of total running time spent by \topcom for computing all triangulations of the $4$-cube.  The values for the most expensive subtasks are refined in the second and third columns.}
\label{table:callgrind:topcom}
\begin{tabular}{rrrl}
\toprule
99\% &  &     & Process flips\\
& \multicolumn{1}{|r}{92\%} & & Check whether class was already found\\
& \multicolumn{1}{|r}{} & \multicolumn{1}{|r}{82\%} & Enumerate orbits\\
& \multicolumn{1}{|r}{} & \multicolumn{1}{|r}{10\%} & all remaining\\
& \multicolumn{1}{|r}{7\%} & & all remaining\\
1\% & & & all remaining\\
\bottomrule
\end{tabular}
\end{table}

\subsection{Products of two simplices}
\label{subsec:product}
Another interesting class of point configurations are the products $\Delta_p\times\Delta_q$ of two simplices \cite[\S6.2]{Triangulations}.
The natural group action is by the product $\Sym(p+1)\times\Sym(q+1)$ of symmetric groups.
In tropical geometry, e.g., their regular subdivisions control the combinatorial types of tropical polytopes \cite[\S5.2]{Tropical+Book}.
The special case where one of the factors is one-dimensional, i.e., when the product of simplices is a prism, is fully understood \cite[\S6.2.1]{Triangulations}.
Therefore, in our experiments we restrict our attention to cases with $2\leq p\leq q$.
There is a formula for the number of all triangulations of $\Delta_2\times\Delta_q$ \cite[9.2.5]{Triangulations}, but this does not immediately yield the number of (semi-)regular triangulations or the number of orbits.

Figure \ref{fig:s3s7-timings-cluster} shows the speed for computing the triangulations in the
regular component of $\Delta_2 \times \Delta_6$ with \mptopcom on the cluster, depending on the number of processes.
This computation is medium size, i.e., a bit larger than the previous, and so it pays to use more processes.
The timings are as follows: one hour and 22 minutes with \mptopcom (10 processes), five hours with the single-threaded version \mptopcomone and eight days and 16 hours with \topcom.
A computation of this kind essentially marks the end of \topcom's range.

Our new results for $\Delta_2\times\Delta_6$ and $\Delta_3\times\Delta_4$ helped Schr\"oter \cite{Schroeter:1707.02814} to obtain new results on coarsest subdivisions of hypersimplices.
An attempt to handle $\Delta_3\times\Delta_5$ is currently under way (running on more than a hundred cores for some weeks).
So far it has found more than 900 million orbits of sub-regular triangulations.

\subsection{Dilated simplices}
\label{subsec:dilated-simplex}
A third class of point configurations is denoted as $k\cdot\Delta_d$.
These are the $\tbinom{n+d}{n}=\tbinom{n+d}{d}$ lattice points in the simplex $\Delta_d$ which is dilated by the factor~$k$.
For any polynomial in $d+1$ indeterminates which is homogeneous of degree $k$ the monomials correspond to points in the point configuration $k\cdot\Delta_d$.
In particular, the vertices of the Newton polytope form a subconfiguration.
It follows that the tropical hypersurfaces in the tropical $d$-torus $\TT^{d+1}/\RR\1$ of homogeneous degree $k$ are dual to regular subdivisions of $k\cdot\Delta_d$; see \cite[\S3.1]{Tropical+Book}.
The regular unimodular triangulations of $k\cdot\Delta_d$, which are necessarily full, correspond to those tropical hypersurfaces which are smooth.
For the first time, we computed the full triangulations of $3\cdot\Delta_3$, and these classify the smooth tropical cubics in $3$-space \cite[\S4.5]{Tropical+Book}.

\begin{theorem}\label{thm:cubic-surfaces}
  There are exactly $21\,125\,102$ orbits of regular and full triangulations of $3\cdot\Delta_3$ with respect to the natural action of the symmetric group of degree four. Out of these, $14\,373\,645$ are unimodular.
\end{theorem}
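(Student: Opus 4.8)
This is a computational statement, so the plan is to run down-flip reverse search (Algorithm~\ref{alg:down-flip}), restricted to regular and full triangulations, on the configuration $P = 3\cdot\Delta_3$ and to count. First I would fix the combinatorial setup: the $\binom{6}{3}=20$ points are the vectors $(a_0,a_1,a_2,a_3)\in\mathbb{Z}_{\geq 0}^4$ with $\sum_i a_i=3$; I would choose a (random) ordering of them, enumerate all $\binom{20}{4}$ four-element subsets, keep those that are affinely spanning to obtain the list of possible maximal simplices, and precompute the normalized volume of each. For the symmetry I would take $\group = \Sym(4)$ acting by permuting the four coordinates; this is unimodular and volume-preserving, so Lemma~\ref{lemma:gkzaction} applies, and I would then compute a switch table for the induced action of $\group$ on the $20$ point-indices. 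Since every triangulation that occurs is regular, the evaluation map $v = \gkz$ suffices, with no need for the tie-breaker of Example~\ref{ex:characteristic}.

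Next I would run the search itself. An initial regular full triangulation is readily available (for instance a pulling triangulation, which uses every point), and because the subgraph of $\PhiReg$ induced on the full triangulations is connected \cite[\S5]{Imai:2002}, the down-flip reverse search visits each regular full triangulation exactly once; at each node Algorithm~\ref{alg:canonical-rep} returns its orbit representative, which is correct by Proposition~\ref{prop:can}. I would parallelize the traversal with \mts and count the distinct representatives produced. This is what should yield the figure $21\,125\,102$.

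For the second count, observe that the normalized volume of $3\cdot\Delta_3$ equals $3^3 = 27$, so a full triangulation is unimodular if and only if all of its maximal cells have normalized volume $1$, equivalently if and only if it consists of exactly $27$ cells. Hence among the enumerated orbits I would simply retain those whose representative has $27$ maximal simplices (which are then automatically all unimodular), obtaining $14\,373\,645$.

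The hard part will be sheer scale together with trustworthiness of the output. With roughly $21$ million orbits, most of which have the full size $24$, the traversal touches on the order of half a billion triangulations, each revisited several times due to the recomputation intrinsic to reverse search; this forces careful use of the flip, orbit and regularity caches and genuine many-core parallelization in order to terminate in feasible time. To gain confidence in the exact numbers I would rerun the computation with several random orderings of the $20$ points (which changes the search tree but not the counts), cross-check the number of regular triangulations against \topcom on subproblems it can still handle, and perform internal consistency checks such as verifying that the optimal triangulation is reached from the seed by up-flips and that the emitted representatives are pairwise inequivalent under $\group$.
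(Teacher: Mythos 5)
Your proposal is essentially the paper's own argument: the theorem is established purely computationally by running \mptopcom's down-flip reverse search restricted to regular full triangulations of $3\cdot\Delta_3$ up to the $\Sym(4)$-action (switch tables plus GKZ-evaluation, parallelized with \mts, relying on connectivity of the full triangulations within $\PhiReg$), which the paper reports as a roughly four-day run on 40 threads. Your additional details (pulling triangulation as a full regular seed, identifying unimodular triangulations as those with exactly $27$ maximal cells, and consistency checks via reorderings and \topcom on smaller cases) are correct refinements of the same approach rather than a different route.
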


This is the largest experiment that we completed so far.
The computation took about four days on the Intel Xeon E5-2630 v2 cluster with 40 threads.

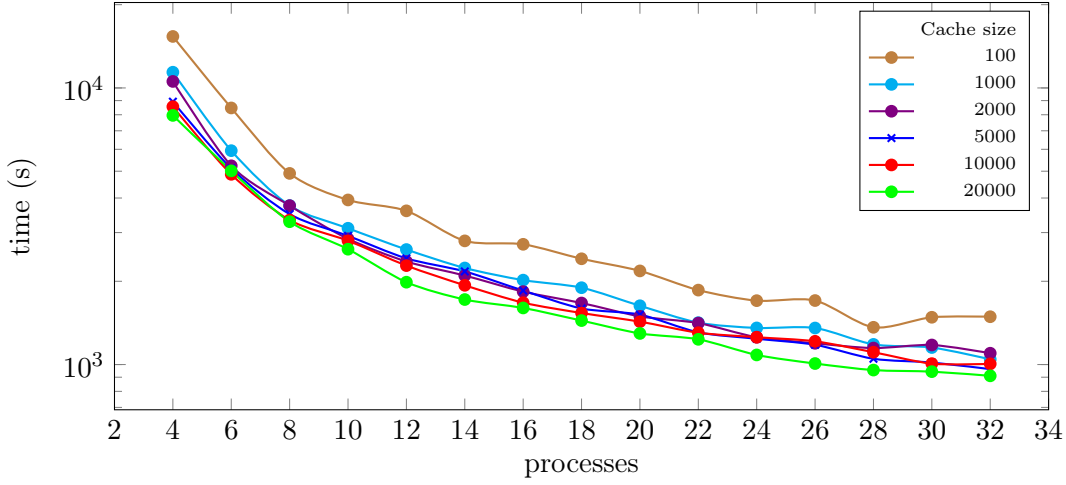
\begin{figure}[htb]
\begin{tikzpicture}
\begin{axis}[
      ylabel=\text{time (s)},
      xlabel=\text{processes},
      xmin = 2, xmax = 34,
      width = 14cm, height = 7cm,
      legend style={legend cell align=right,font=\tiny},
      ymode=log
   ]
   \addlegendimage{empty legend}
   \addlegendentry{Cache size}
   \addplot[smooth, mark=*, brown, thick] plot coordinates {
      (4, 15351.557)
      (6, 8466.334)
      (8, 4909.781)
      (10, 3936.779)
      (12, 3595.186)
      (14, 2799.886)
      (16, 2720.895)
      (18, 2412.165)
      (20, 2180.331)
      (22, 1858.7)
      (24, 1702.276)
      (26, 1703.843)
      (28, 1363.666)
      (30, 1482.363)
      (32, 1489.024)
   };
   \addlegendentry{100}
   \addplot[smooth, mark=*, cyan, thick] plot coordinates {
      (4, 11398.688)
      (6, 5932.758)
      (8, 3756.121)
      (10, 3109.799)
      (12, 2604.94)
      (14, 2234.16)
      (16, 2019.309)
      (18, 1896.14)
      (20, 1631.529)
      (22, 1417.296)
      (24, 1356.447)
      (26, 1355.483)
      (28, 1182.746)
      (30, 1152.325)
      (32, 1044.605)
   };
   \addlegendentry{1000}
   \addplot[smooth, mark=*, violet, thick] plot coordinates {
      (4, 10557.309)
      (6, 5229.791)
      (8, 3754.2)
      (10, 2848.44)
      (12, 2358.143)
      (14, 2097.936)
      (16, 1835.361)
      (18, 1669.056)
      (20, 1493.681)
      (22, 1412.278)
      (24, 1255.364)
      (26, 1193.696)
      (28, 1147.242)
      (30, 1176.816)
      (32, 1098.865)
   };
   \addlegendentry{2000}
   \addplot[smooth, mark=x, blue, thick] plot coordinates {
      (4, 8919.394)
      (6, 5113.214)
      (8, 3497.711)
      (10, 2919.88)
      (12, 2420.162)
      (14, 2170.678)
      (16, 1851.437)
      (18, 1594.905)
      (20, 1514.831)
      (22, 1307.553)
      (24, 1239.657)
      (26, 1178.837)
      (28, 1049.333)
      (30, 1016.882)
      (32, 962.591)
   };
   \addlegendentry{5000}
   \addplot[smooth, mark=*, red, thick] plot coordinates {
      (4, 8556.665)
      (6, 4871.901)
      (8, 3330.371)
      (10, 2803.193)
      (12, 2278.886)
      (14, 1934.356)
      (16, 1674.097)
      (18, 1535.849)
      (20, 1429.177)
      (22, 1303.431)
      (24, 1255.514)
      (26, 1213.473)
      (28, 1108.22)
      (30, 1007.654)
      (32, 1005.773)
   };
   \addlegendentry{10000}
   \addplot[smooth, mark=*, green, thick] plot coordinates {
      (4, 7956.224)
      (6, 5014.479)
      (8, 3283.934)
      (10, 2612.28)
      (12, 1985.174)
      (14, 1718.262)
      (16, 1601.925)
      (18, 1442.025)
      (20, 1295.639)
      (22, 1233.692)
      (24, 1083.393)
      (26, 1008.521)
      (28, 955.133)
      (30, 942.644)
      (32, 910.354)
   };
   \addlegendentry{20000}
   % \addplot[mark=none, blue, thick, dashed, samples = 2, domain=2:100] {2580.70};
   % \addlegendentry{\mptopcomone}
   % \addplot[mark=none, red, thick, samples = 2, domain=2:100] {3046.26};
   % \addlegendentry{\topcom}
\end{axis}
\end{tikzpicture}
\caption{Timings for enumerating the triangulations with \mptopcom of $\Delta_2\times\Delta_6$ in the regular component taken on the Intel Xeon E5-2630 v2 cluster with 64GB RAM per node, depending on the number of processes and cache sizes.  Each data point is the result of averaging over ten runs with the same parameters.}
\label{fig:s3s7-timings-cluster}
\end{figure}

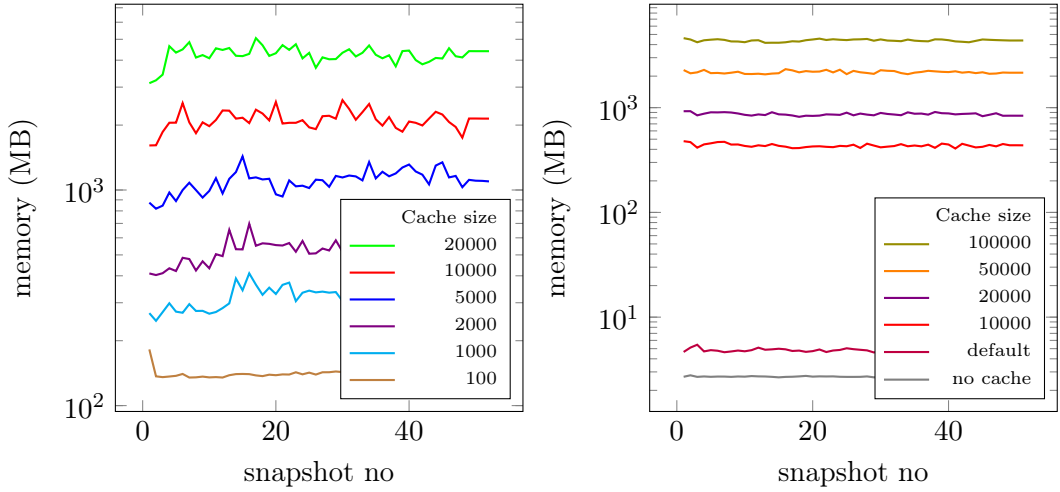
\begin{figure}[ht]
\begin{tikzpicture}
\begin{axis}[
      ylabel=\text{memory (MB)},
      xlabel=\text{snapshot no},
      width = 7cm, height = 7cm,
      legend style={legend cell align=right,font=\tiny},
      legend pos=south east,
      ymode=log
   ]
   \addlegendimage{empty legend}
   \addlegendentry{Cache size}
   \addplot[green ,thick] table {data/D6xD2/cache20000.out.plot};
   \addlegendentry{20000};
   \addplot[red ,thick] table {data/D6xD2/cache10000.out.plot};
   \addlegendentry{10000};
   % \addplot[yellow ,thick] table {data/D6xD2/defaultCache.out.plot};
   % \addlegendentry{default};
   \addplot[blue ,thick] table {data/D6xD2/cache5000.out.plot};
   \addlegendentry{5000};
   \addplot[violet ,thick] table {data/D6xD2/cache2000.out.plot};
   \addlegendentry{2000};
   \addplot[cyan ,thick] table {data/D6xD2/cache1000.out.plot};
   \addlegendentry{1000};
   \addplot[brown ,thick] table {data/D6xD2/cache100.out.plot};
   \addlegendentry{100};
\end{axis}
\end{tikzpicture}
\begin{tikzpicture}
\begin{axis}[
      ylabel=\text{memory (MB)},
      xlabel=\text{snapshot no},
      width = 7cm, height = 7cm,
      legend style={legend cell align=right,font=\tiny},
      legend pos=south east,
      ymode=log,
   ]
   \addlegendimage{empty legend}
   \addlegendentry{Cache size}
   \addplot[olive ,thick] table {data/3D3/cache100000.out.plot};
   \addlegendentry{100000};
   \addplot[orange ,thick] table {data/3D3/cache50000.out.plot};
   \addlegendentry{50000};
   \addplot[violet ,thick] table {data/3D3/cache20000.out.plot};
   \addlegendentry{20000};
   \addplot[red ,thick] table {data/3D3/cache10000.out.plot};
   \addlegendentry{10000};
   \addplot[purple ,thick] table {data/3D3/defaultCache.out.plot};
   \addlegendentry{default};
   \addplot[gray ,thick] table {data/3D3/noCache.out.plot};
   \addlegendentry{no cache};
\end{axis}
\end{tikzpicture}
\caption{Memory usage determined with \valgrind.  We took 52 \massif snapshots for $\Delta_2\times\Delta_6$ (left) and $3\cdot\Delta_3$ (right).}\label{fig:massif-cache}
\end{figure}

\subsection{Using more memory for caching}
On the one hand, as a key benefit, the memory consumption of the down-flip reverse search algorithm allows for an excellent a priori estimate which is also quite low; cf.\ Theorem~\ref{thm:complexity}.
On the other hand this approach results in a considerable amount of duplication.
To avoid at least some of this it is natural to employ caching, as explained in Section~\ref{sec:implementation}.
Here we want to report on some experiments concerning the impact of caching on the overall running time.

First we investigate the medium-size example $\Delta_2\times\Delta_6$ from Section~\ref{subsec:product}.
It has $533\, 242$ sub-regular triangulations up to symmetry.
Figure~\ref{fig:s3s7-timings-cluster} shows how the running-time depends on the
number of workers and on the cache sizes of the three main caches introduced in
Section~\ref{sec:implementation}. Their size can be varied individually, but
for this plot we gave all of them the same size.
As a default \mptopcom stores 2000 triangulations in each of these caches.
In this case increasing the cache size from 100 to 2000 reduces the running time by about one third, independent of the number of threads.
Increasing the cache to $20\, 000$ only results in a further reduction by another $5\%$.

Figure~\ref{fig:massif-cache} shows how the total amount of memory consumed depends on the cache sizes.
The measurements have been taken by \valgrind's tool \massif which records memory snapshots in fixed time intervals \cite{valgrind}.
The left hand side corresponds to the computation in Figure~\ref{fig:s3s7-timings-cluster} for $\Delta_2\times\Delta_6$.
By and large the overall memory consumption depends linearly on the cache sizes.
Note that the sizes of the triangulations as well as the sizes of the various objects for the cache values vary.
So some fluctuations should be expected.
Indeed, this is visible for $\Delta_2\times\Delta_6$, which is not very large.
In the much larger example $3\cdot\Delta_3$ from Section~\ref{subsec:dilated-simplex} we see a similar behavior, but the fluctuations are nearly gone.

\begin{table}[th]\centering
\caption{Summary of enumerations.}
\label{tab:summary}
\begin{tabular}{crrrrrr}
\toprule
Points $P$ & $n=\#P$ & $d=\dim{P}$ & $n-d$ & $\#\group$ & \multicolumn{2}{c}{\#triangulation orbits} \\
           &         &             &       &            & (full) regular & sub-regular        \\
\midrule
$I^3$                    &  8 & 3 &  5 &      48 &          6 &         6 \\
$I^4$                    & 16 & 4 & 12 &     384 &   235\,277 &  247\,451 \\
% commit 94cfa1a79ab31152d502025810a5e9aad99875df, cluster40
% Total nodes: 247451  Total number of jobs: 10038   Elapsed time: 271 seconds.
% real 273.07
% user 2708.24
% sys 3.10
%
% Topcom timing: (single thread on marvin (ryzen))
% 247451 symmetry classes of triangulations in total.
% ... done.
% real 2344.61
% user 2343.40
% sys 0.64
% Mptry timing: (single thread on marvin (ryzen))
% 247451 symmetry classes of triangulations in total.
% ... done.
% real 8625.89
% user 8622.20
% sys 0.71
$I^5$                    & 32 & 5 & 27 &   3\,840 &          &           \\[.2cm]
%$I^6$                    & 64 & 6 & 58 &  46\,080 &          &           \\[.2cm]
$\Delta_2\times\Delta_2$ &  9 & 4 &  5 &       36 &        5 &         5 \\
$\Delta_2\times\Delta_3$ & 12 & 5 &  7 &      144 &       35 &        35 \\
$\Delta_2\times\Delta_4$ & 15 & 6 &  9 &      720 &      530 &       530 \\
$\Delta_2\times\Delta_5$ & 18 & 7 & 11 &   4\,320 &  13\,621 &   13\,629 \\
$\Delta_2\times\Delta_6$ & 21 & 8 & 13 &  30\,240 & 531\,862 &  533\,242 \\[.2cm]
% This one should be within reach
%$\Delta_2\times\Delta_7$ & 24 & 9 & 15 & 241\,920 &          &           \\[.2cm]
$\Delta_3\times\Delta_3$ & 16 & 6 & 10 &      576 &   7\,869 &    7\,955 \\
% commit 94cfa1a79ab31152d502025810a5e9aad99875df, cluster40
% Total nodes: 533242  Total number of jobs: 9429   Elapsed time: 1699 seconds.
% real 1703.93
% user 16984.02
% sys 26.10
% TOPCOMs points2triangs takes roughly 8 days
%
% Mptry timing: (single thread on marvin (ryzen))
% 1. Default setting (time -p ./src/mptry -v < mp_examples/schroet/in_s3s7 > /dev/null)
%   533242 symmetry classes of triangulations in total.
%   ... done.
%   real 76894.69
%   user 76803.39
%   sys 4.88
% 2. with time -p ./src/mptry -v --symmetry_cache 5000 --flip_cache 5000 < mp_examples/schroet/in_s3s7
%   533242 symmetry classes of triangulations in total.
%   ... done.
%   real 33134.89
%   user 33109.91
%   sys 5.07
% 3. with time -p ./src/mptry -v --symmetry_cache 5000 --flip_cache 5000 < mp_examples/schroet/in_s3s7
%   commit ad0d36bf0cce0b1942a59acfff23ed5e81e5d37
%   533242 symmetry classes of triangulations in total.
%   ... done.
%   real 30919.91
%   user 30904.29
%   sys 2.97
%
% Topcom timing: (single thread on marvin (ryzen))
% Pending
$\Delta_3\times\Delta_4$ & 20 & 7 & 13 &   2\,880 & 7\,051\,957 & 7\,402\,421 \\
% commit 271d0191ed1e114445a5efa1524ce25ef76a1028, cluster40
% Total nodes: 7402421  Total number of jobs: 33764   Elapsed time: 226924 seconds.
% real 226927.34
% user 4525718.17
% sys 215.14
% -----------------------------------------------------------------------------
% commit 94cfa1a79ab31152d502025810a5e9aad99875df, cluster40
% Total nodes: 7402421  Total number of jobs: 36325   Elapsed time: 108854 seconds.
% real 108858.91
% user 1088376.97
% sys 158.04
$\Delta_3\times\Delta_5$ & 24 & 8 & 16 &  17\,280 &   & $> 9 \cdot 10^8$\\[0.2cm]

$2\Delta_3$              & 10 & 3 &  7 &       24 &           15 & 59\\
$3\Delta_3$              & 20 & 3 & 17 &       24 & 21\,125\,102 & 925\,148\,763\\
% Total nodes: 21125102  Total number of jobs: 31275   Elapsed time: 380309 seconds.
% real 380312.72
% user 4546738.25
% sys 295.02
$4\Delta_3$              & 35 & 3 & 32 &       24 &              & \\
\bottomrule
\end{tabular}
\end{table}

\section{Concluding remarks}\label{sec:concluding_remarks}
\noindent
Table~\ref{tab:summary} also contains some empty rows, where we do not know the respective number of triangulations.
Most of these will be out of reach for the current implementations, including \mptopcom.
The reason for listing these nonetheless is to give a feel for the orders of magnitude involved.
One main complexity parameter for enumerating triangulations is the difference $n-d$ of the number of points and the dimension.
This is also one plus the dimension of the secondary fan, modulo linealities.
Our experiments suggest that, as a very rough estimate, the range for \topcom seems to be limited by $n-d \approx 13$.
This bar is raised substantially by \mptopcom to cover point configurations with $n-d = 17$ such as $3\cdot\Delta_3$.
It is an interesting question if \mptopcom can, e.g., deal with $I^3\times\Delta_2$ where $n-d=19$.
That particular point configuration played a role in work of Orden and Santos \cite{OrdenSantos:2003} on efficient triangulations of cubes; see also \cite[\S6.3.3]{Triangulations}.

The empty rows of Table~\ref{tab:summary} show some cases which seem to be rather difficult challenges, with the current techniques.
This includes the five-dimensional cube $I^5$ or the dilated simplex $4\cdot\Delta_3$.
Proving results about their triangulations might require clever strategies for random probing.

Another direction which looks promising is to investigate the triangulations of the cyclic polytopes.
This is related to the higher Stasheff--Tamari orders which were introduced by Kaparanov and Voevodsky \cite{KapranovVoevodsky:Stasheff-Tamari} and studied, e.g., by Edelman and Reiner \cite{EdelmanReiner:Stasheff-Tamari}; see Rambau and Reiner \cite{RambauReiner:Stasheff-Tamari} for a survey.

\bibliographystyle{amsplain}
\bibliography{mptopcom}

\end{document}